\newcommand{\comment}[1]{}
\newcommand{\eq}{\begin{equation}}
\newcommand{\en}{\end{equation}}
\newcommand{\rr}{\mathbb{R}}
\newcommand{\norm}[1]{\left\lVert #1 \right\rVert}
\newcommand{\abs}[1]{\left\lvert #1 \right\rvert}
\newcommand{\iprod}[1]{\left\langle #1 \right\rangle }
\newcommand{\newR}{\mathfrak{R}}
\newcommand{\ev}{\mathbb E}  
\newcommand{\gen}{\mathcal{L}}
\newcommand{\gendual}{\hat{\mathcal{L}}}
\newcommand{\ep}{\hfill $\Box$}
\begin{document}

\theoremstyle{plain}
\newtheorem{thm}{Theorem}
\newtheorem{lemma}[thm]{Lemma}
\newtheorem{prop}[thm]{Proposition}
\newtheorem{cor}[thm]{Corollary}

\theoremstyle{definition}
\newtheorem{defn}{Definition}
\newtheorem{asmp}{Assumption}
\newtheorem{notn}{Notation}
\newtheorem{prb}{Problem}

\theoremstyle{remark}
\newtheorem{rmk}{Remark}
\newtheorem{exm}{Example}
\newtheorem{clm}{Claim}

\title[Rate of convergence]{Convergence rates for rank-based models with applications to portfolio theory}

\author{Tomoyuki Ichiba, Soumik Pal, and Mykhaylo Shkolnikov}
\address{Department of Statistics and Applied Probability \\ University of California \\ Santa Barbara, CA 93106}
\email{ichiba@pstat.ucsb.edu}
\address{Department of Mathematics\\ University of Washington\\ Seattle, WA 98195}
\email{soumik@u.washington.edu}
\address{Department of Mathematics\\ Stanford University\\ Stanford, CA 94305}
\email{mshkolni@math.stanford.edu}

\keywords{Stochastic portfolio theory, reflecting Brownian motion, market weights}

\subjclass[2000]{60K35, 60G07, 91B26}

\thanks{This research is partially supported by NSF grants DMS-1007563 and DMS-0806211}

\date{\today}

\begin{abstract} 
We determine rates of convergence of rank-based interacting diffusions and semimartingale reflecting Brownian motions to equilibrium. Convergence rate for the total variation metric is derived using Lyapunov functions. Sharp fluctuations of additive functionals are obtained using Transportation Cost-Information inequalities for Markov processes. We work out various applications to the rank-based abstract equity markets used in Stochastic Portfolio Theory. For example, we produce quantitative bounds, including constants, for fluctuations of market weights and occupation times of various ranks for individual coordinates. Another important application is the comparison of performance between symmetric functionally generated portfolios and the market portfolio. This produces estimates of probabilities of ``beating the market''.       
\end{abstract}

\maketitle

\section{introduction}

Informally a rank-based model is a multidimensional Markov process whose instantaneous dynamics is a function of the order in which the coordinates can be \textit{ranked}. When the process is a diffusion its movement can be described by a stochastic differential equation (SDE) as follows.  Let $n$ be the dimension of the process. Let $\delta_i$ and $\sigma_i$, $1\le i\le n$, be two finite collections of real and positive real constants, respectively. For any vector $x\in \rr^n$, let $x_{(1)} \le x_{(2)} \le \ldots \le x_{(n)}$ denote its ranked coordinates in increasing order. Consider the following system of stochastic differential equations:  For each $i=1,2,\ldots,n$ we have
\begin{equation}\label{eq1}
dX_i(t) = \left(\sum_{j=1}^n \delta_j\cdot1_{\left\{X_i(t)=X_{(j)}(t)\right\}}\right) dt + \left(\sum_{j=1}^n \sigma_j\cdot1_{\left\{X_i(t)=X_{(j)}(t)\right\}}\right) dW_i(t).
\end{equation}
Here $W=(W_1, \ldots, W_n)$ is a system of jointly independent one-dimensional standard Brownian motions. Assumptions and conditions guaranteeing the existence of such processes will be discussed later in the text. 

Different versions of the particle system in (\ref{eq1}) have been considered in several recent articles. Among the more recent ones, see Banner, Fernholz and Karatzas \cite{BFK}, Banner and Ghomrasni \cite{BG}, McKean and Shepp \cite{sheppmckean}, Pal and Pitman \cite{pp}, Jourdain and Malrieu \cite{joumal}, Chatterjee and Pal \cite{CP,chatpal2}, Ichiba and Karatzas \cite{IK}, Ichiba et al.~\cite{IPBKF}, Pal and Shkolnikov \cite{PS}, and Shkolnikov \cite{sh,sh2}. Related discrete time processes are studied in the context of competing particle systems by Arguin and Aizenman \cite{AA}, Ruzmaikina and Aizenman \cite{ruzaizenman}, Shkolnikov \cite{shkol}, and R\'acz \cite{racz}. We refer the reader to the above articles for the full list of applications of such processes (keywords: McKean-Vlasov equations, competing particles, dynamic models of spin glasses, models of equity markets, and queueing theory).

Processes satisfying \eqref{eq1} do not have any equilibrium. For example consider the average of all the coordinates. The resulting process is a linear diffusion with a constant drift and a constant diffusion coefficient. This process clearly has no equilibrium. The interesting long-term asymptotics arise when one considers the centered process by subtracting the average from each of the coordinates.  Under appropriate conditions (see below), the centered coordinate process is positively recurrent in the neighborhoods of the origin and has a unique stationary distribution. 

Another interesting and useful feature of this model is the process of spacings:
\[
Y_i(t) = X_{(i+1)}(t) - X_{(i)}(t), \qquad i=1,2,\ldots, n-1.
\]
The vector $Y=(Y_1, Y_2, \ldots, Y_{n-1})$ has the law of a semimartingale reflecting Brownian motion (SRBM) in the positive orthant 
$(\rr_{+})^{n-1}$ 
with a constant drift and diffusion coefficient and a constant oblique direction of reflection on each face of the orthant.  
This is an important class of processes that arise as a heavy-traffic limit of queues. Please see the survey by Williams \cite{W2}.

A particularly interesting example of the rank-based models is the so-called Atlas model in which one takes $\sigma_i = \sigma$ for all $i$ and $\delta_i=0$ for all $i > 1$. At any moment, the smallest particle gets an upward drift of $\delta_1=\delta >0$ while the others are locally independent Brownian motions. The asymmetry of the interaction among the particles sets it apart from the usual models of colliding particles (see, e.g., Harris \cite{harris65}, Swanson \cite{Sw}). In discrete time, this model can be compared with the asymmetric exclusion process or the Hammersley-Aldous-Diaconis process. 
 
In this paper we shall determine exponential rates of convergence to the equilibrium for rank-based models and the derived reflecting Brownian motions. We also obtain sharp Gaussian fluctuations around the equilibrium mean for additive functionals of these processes. The main difficulty one encounters with these processes is the lack of smoothness in its drift and diffusion parameters. Classical theorems often cannot be used and novel methods have to be invented. Our approach is a combination of several tools: an extended construction of Lyapunov functions for SRBM introduced in Dupuis and Williams \cite{DW}, recent advances in Transportation Cost-Information inequalities as in Bakry et al.~\cite{BBCG} and Guillin et al.~\cite{GLWY} and classical Poincar\'e inequalities satisfied by the associated Dirichlet forms. 

We should add that geometric ergodicity for certain recurrent SRBMs has been shown in a recent paper by Budhiraja and Lee \cite{BL}. However, these results lack constants and the proofs cannot be modified to yield explicit estimates. Our results have explicit constants wherever possible. We also give a shorter and simpler proof of geometric ergodicity itself.   

One of our primary motivations is to solve certain open problems described in the survey by Fernholz and Karatzas \cite{FI}. These are related to the area of Stochastic Portfolio Theory to which we provide a very brief introduction.

\subsection{A brief introduction to stochastic portfolio theory}

Stochastic Portfolio Theory (Fernholz \cite{F}, Fernholz \& Karatzas \cite{FI}) is a descriptive theory of equity market models (i.e., a dynamical model of total wealth that various companies raise through their stocks) which aims to be compatible with data on long-term market structure. This is a departure from the usual Black-Scholes models that are normative and are not supported by data. One significant difference between the two models is that while the Black-Scholes model assumes the principle of \text{no-arbitrage} in all its formulas, models in SPT, in fact, try to uncover arbitrages.   

\begin{figure}[t]
\begin{center}
\includegraphics[height=3.2in]{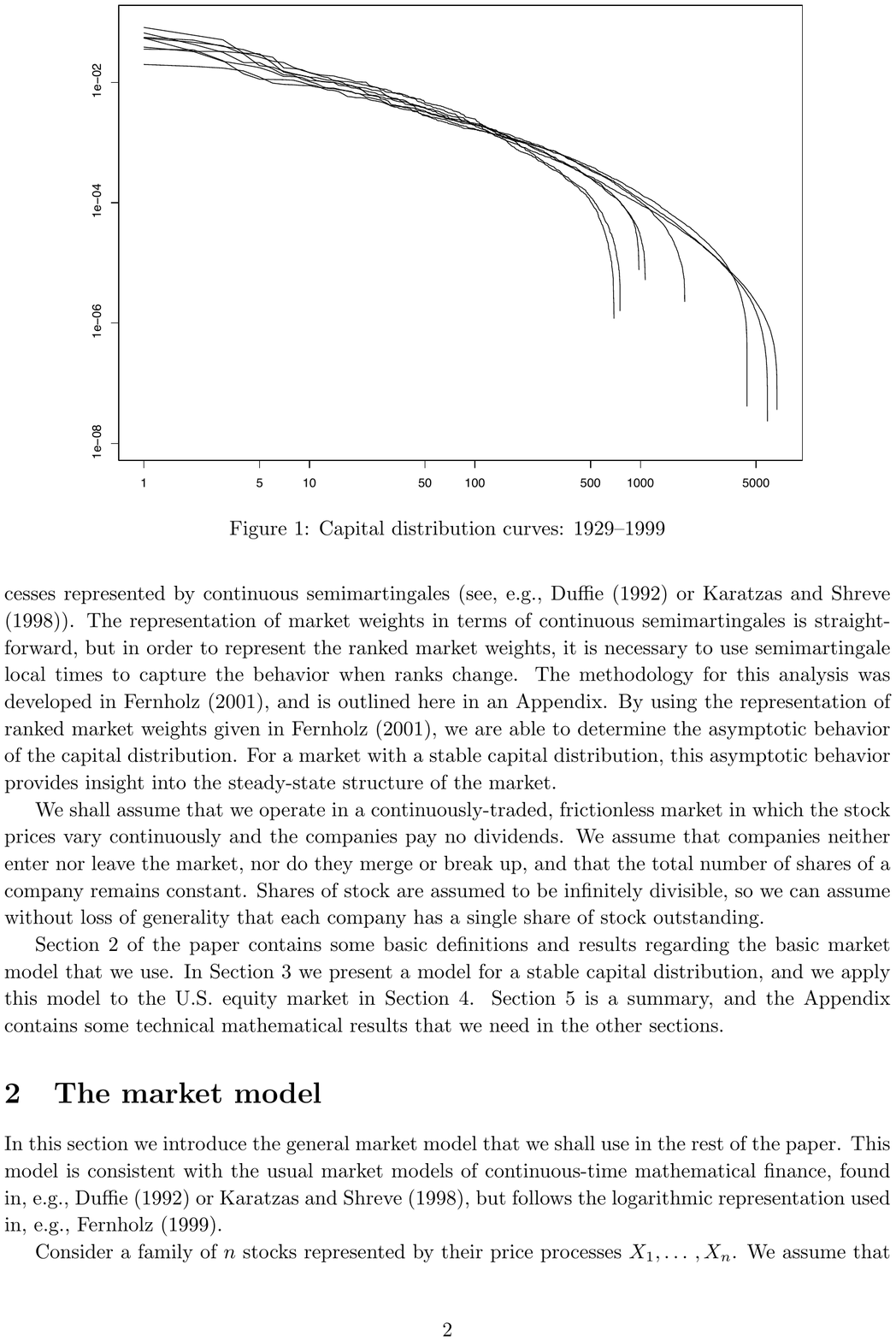}
\caption{Capital distribution curves: 1929-1999}
\label{capdist}
\end{center}
\end{figure}

The importance of the rank-based models stems from the fact that they match the data of the \textit{capital distribution curve}. To explain this, suppose index $i$ denotes a typical company listed in any of the major U.S. stock exchanges. Let $S_i(t)$ be its market capital (i.e., number of shares $\times$ price of each share) at time $t$. Let $S(t)=(S_1, S_2, \ldots, S_n)(t)$ denote the vector of market capitals of all such companies. Here $n$ is of the order of several thousands. We define the market weights as
\[
\mu_{i}(t) = \frac{S_i(t)}{\sum_{j=1}^n S_j(t)}, \quad i=1,2,\ldots,n.
\]
It has been observed for over \textit{eight decades} that if we arrange the market weights in decreasing order then they display a power-law decay. In other words, the $i$th largest market weight is proportional to $i^{-\alpha}$. See Figure \ref{capdist} which plots $\log \mu_{(n+1-i)}(t)$ versus $\log i$ for one instance of $t$ per decade. These curves are called capital distribution curves. It is known (see \cite{BFK}, \cite{CP}) that if we define $X_i=\log S_i$ to be a rank-based model, then, under appropriate assumptions, such a power law decay can be proved in equilibrium for large $n$.  

However, the stability of the market weights over time is more mysterious. It is clear that market parameters do not remain constant. In fact, the number of listed companies have grown ten fold. The reason for this stability has 
three reasons: (i) the limiting shape is independent of the parameters satisfying certain weak conditions, (ii) a fast return to equilibrium when perturbed in the parameters or dimension, (iii) tame fluctuations during the period that it takes the process to mix. (i) has been shown in \cite{CP}. In \cite{PS}, the authors have proved a fluctuation of order $\sqrt{T}$ for the shape of the market weights over intervals of time of length $T$. This shows (iii). In this article we take up (ii) and show an exponential rate of convergence for market weights.   

Another important topic of analysis is the performance of portfolios when the market is modeled as rank-based. A portfolio $(\pi_1(t), \ldots, \pi_n(t))$ is a random process that takes values in the simplex $\{ x\in \rr^n:\; x_i \ge 0 \; \text{and}\; \sum_i x_i=1\}$. The quantity $\pi_i(t)$ represents the proportion of wealth an investor invests in stocks of company $i$ at time $t$. When $\pi\equiv \mu$, the market weights, the portfolio is called the market portfolio. The latter is of central importance since the market portfolio determines various index funds (i.e., portfolio tracking an index such as S\&P 500). An important question in practice is the relative performance of portfolios when compared to the market portfolio. 

Let $V^{\pi}(t)$ denote the wealth of an investor who has followed portfolio $\pi$ with an initial investment of $\$ 1$. Often 
$\pi= {\mathbf H}(\mu)$ 
is a function of the market weights. Such portfolios are called functionally generated (see the survey \cite{FI}). We focus on estimating tail probabilities of the quantity $V^{\pi}(t)/V^{\mu}(t)$ and its reciprocal for a functionally generated portfolio $\pi$. These are probabilities that the portfolio $\pi$ will \textit{beat the market} which determine its attractiveness to investors.   

This is closely related to finding relative arbitrage opportunities. According to the definition in \cite{FI}, the rank-based models are not \textit{diverse}, i.e., the market entropy process is not bounded from below. This implies that the entropy weighted portfolio (see Section \ref{portfolio} below) does not necessarily provide a long-term arbitrage opportunity over the market portfolio. Thus a natural question is an estimate of the tail probability of the ratio of value processes between the functionally generated portfolio and the market portfolio.

\subsection{Main results}

Define the set $I$ as $\{1,\dots,n\}$ and consider the SDE in \eqref{eq1}: 
\begin{equation}\label{ranksde}
dX_i(t) = \sum_{j\in I} \delta_j\cdot1_{\left\{X_i(t)=X_{(j)}(t)\right\}} dt + \sum_{j\in I} \sigma_j\cdot1_{\left\{X_i(t)=X_{(j)}(t)\right\}} dW_i(t),\quad i\in I.
\end{equation}
The weak solution of (\ref{ranksde}) exists and it is unique in law (e.g., \cite{BFK}, \cite{BP}).  

We assume the following constraint on the drift constants: for $1\le k \le n-1$, the constants
\eq\label{deltaassump}
\alpha_k := 2\sum_{i=1}^k \left( \delta_i - \overline{\delta}  \right) \quad \text{are strictly positive},
\en
where $\overline\delta := \sum_{j \in I} \delta_{j} / n$ is the average drift.
This is a stability criterion that guarantees the existence of an invariant distribution for spacings 
or ``gaps'' 
(see \cite{IPBKF}). 

For some of our results we will also assume the condition
\eq\label{sigmaassump}
\sigma_2^2-\sigma_1^2=\dots=\sigma_n^2-\sigma_{n-1}^2
\en
on the diffusion coefficients.
This is an assumption that is directly motivated by data on volatility. See Figure 13.6 in \cite{FI} which shows estimated volatilities to be almost linearly decreasing with rank.   

Let $\nu$ denote the law on $\rr^{n-1}$ of an $(n-1)$-dimensional random vector with independent coordinates, where the $k$-th coordinate is distributed according to the Exponential distribution of parameter 
\[
\tilde{\alpha}_k:=2\alpha_k(\sigma_k^2+\sigma_{k+1}^2)^{-1}. 
\]
It is shown in Corollary 2 of \cite{IPBKF} that under \eqref{deltaassump} the distribution of the spacing system $(Y(t) := (Y_{1}(t), \ldots , Y_{n-1}(t)), t \ge 0)\, $ consisting of  
\[
Y_{j} (t) := X_{(j+1)}(t) - X_{(j)}(t), \quad  j = 1, \ldots , n-1, 
\]
converges in total variation norm as $t\to \infty$ to its unique stationary distribution. Moreover, under condition \eqref{sigmaassump} the latter is given by the measure $\nu$ defined above, as explained in section 5 of \cite{IPBKF}. 
We call a probability measure $\kappa $ supported by $(\rr_{+})^{n-1}$ a spacing distribution. The above $\nu$ is a spacing distribution. 

In this setting we obtain the following theorem.


\begin{thm}\label{mainadd}
Let condition \eqref{deltaassump} hold and suppose that $\sigma_i\equiv 1$, $i\in I$. Then for every initial spacing distribution $\kappa $ that is absolutely continuous and such that ${d\kappa}/{d\nu}$ is in 
$\mathbf{L}^2((\rr_{+})^{n-1}; \nu)$, 
and for every bounded function $u$ of the spacings with 
$\nu(u):= \int_{(\rr_{+})^{n-1}} u\; d \nu = 0$
and 
$\text{Var}_\nu(u) := \int_{(\rr_{+})^{n-1}} u^{2} d \nu = \sigma^2$, 
and $t,r,\epsilon >0$, the following estimate holds:
\eq\label{maintailbound}
\begin{split}
P&\left( \frac{1}{t} \int_0^t u(Y(s)) ds \ge r  \right) \le\\
&\norm{\frac{d\kappa}{d\nu}}_2 \exp\left[ -\frac{t}{\beta} \max\left( \frac{r^2}{\delta^2(u)}, 4\epsilon(\epsilon+ \sigma^2)\left( \sqrt{1 + \frac{r^2}{2\epsilon(\epsilon + \sigma^2)^2 \norm{u}_\infty^2}} -1   \right) \right)  \right].
\end{split}
\en
Here $\delta(u) := \sup\abs{u(x) - u(y)}$ is the range of $u$ and 
\eq\label{whatisbeta}
\beta=\frac{4 \lambda_{n}}{\min_{1\leq k\leq n-1} \tilde{\alpha}^2_k},
\en
where $\lambda_n$ is given in \eqref{lambda_n} below. 
\end{thm}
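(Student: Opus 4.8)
The plan is to view \eqref{maintailbound} as an application of the transportation cost--information methodology of Guillin, L\'eonard, Wu and Yao \cite{GLWY} (see also \cite{BBCG}) to the gap process $Y=(Y_1,\dots,Y_{n-1})$, in three stages: identify the symmetric Dirichlet form governing $Y$; prove a Poincar\'e inequality for $\nu$ with precisely the constant $\beta$ of \eqref{whatisbeta}; deduce \eqref{maintailbound} from it. For the first stage: since $\sigma_i\equiv1$ trivially satisfies \eqref{sigmaassump}, $Y$ is the SRBM in $(\rr_+)^{n-1}$ whose unique stationary law is the product of exponentials $\nu$, and I would verify that $Y$ is reversible with respect to $\nu$ and that its Dirichlet form is
\eq
\mathcal E(f,f)=\tfrac12\int_{(\rr_+)^{n-1}}\iprod{\nabla f,\;R\,\nabla f}\,d\nu ,
\en
where $R$ is the covariance matrix of the semimartingale driving $Y$; for $\sigma_i\equiv1$ this is the $(n-1)\times(n-1)$ tridiagonal matrix with $2$ on the diagonal and $-1$ on each off-diagonal. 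The mechanism is that stationarity forces the drift of $Y$ to be $-\tfrac12 R\tilde\alpha$ (with $\tilde\alpha=(\tilde\alpha_1,\dots,\tilde\alpha_{n-1})$), which cancels the term produced by $\partial_j\log\nu=-\tilde\alpha_j$ in the integration by parts, while the conormal reflection vectors of $Y$ kill the boundary terms. (If reversibility turns out delicate, one works instead with the symmetric part of the generator, whose Dirichlet form is still the one above, and that is all \cite{GLWY} needs.)

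\emph{Poincar\'e inequality.} Next I would prove $\text{Var}_\nu(f)\le\beta\,\mathcal E(f,f)$ by combining two elementary estimates. First, the exponential law of parameter $a$ satisfies $\text{Var}(g)\le(4/a^2)\int(g')^2\,d\mathrm{Exp}(a)$, since the operator $\partial^2-a\partial$ on $[0,\infty)$ with Neumann condition has spectral gap $a^2/4$; tensorising over the $n-1$ independent coordinates of $\nu$ gives $\text{Var}_\nu(f)\le(4/\min_k\tilde\alpha_k^2)\int\abs{\nabla f}^2\,d\nu$. Second, the comparison $\int\abs{\nabla f}^2\,d\nu\le\lambda_n\,\mathcal E(f,f)$, with $\lambda_n$ the explicit constant of \eqref{lambda_n} (equal to $2/\lambda_{\min}(R)$, hence of order $n^2$ for the tridiagonal $R$), turns the Euclidean gradient into $\mathcal E$. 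Multiplying the two yields exactly $\beta=4\lambda_n/\min_k\tilde\alpha_k^2$.

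\emph{From Poincar\'e to the tail bound.} Finally I would feed this into the deviation theorems of \cite{GLWY}: for a $\nu$-symmetric Markov process with Poincar\'e constant $\beta$ and bounded $u$ with $\nu(u)=0$, one has $\log\ev_\nu\big[\exp\big(\lambda\int_0^t u(Y_s)\,ds\big)\big]\le t\,\Lambda(\lambda)$ with $\Lambda(\lambda)$ the top of the spectrum of the perturbed operator, and $\Lambda$ admits a Bernstein/Bennett-type upper bound in terms of $\beta$, $\norm{u}_\infty$ and $\text{Var}_\nu(u)=\sigma^2$; introducing the free parameter $\epsilon$ when bounding the variance contribution and taking the Legendre transform produces the second entry of the maximum in \eqref{maintailbound}. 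The first entry $r^2/\delta^2(u)$ comes instead from the total-variation form of the transportation--information inequality, with the oscillation $\delta(u)$ playing the role of the Lipschitz constant; this too follows from the Poincar\'e inequality, by bounding the total variation distance through the variance of $\sqrt{d\mu/d\nu}$ and then by $\beta\,\mathcal E$. Chebyshev's exponential inequality turns these into tails under $P_\nu$, and the change of initial law uses $\ev_\kappa[F]=\ev_\nu[(d\kappa/d\nu)F]\le\norm{d\kappa/d\nu}_2\,\ev_\nu[F^2]^{1/2}$ --- which is where $d\kappa/d\nu\in\mathbf L^2((\rr_+)^{n-1};\nu)$ enters and where the prefactor $\norm{d\kappa/d\nu}_2$ comes from, the extra factor $2$ in the exponent being absorbed in the optimisation over $\lambda$ and $\epsilon$.

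I expect this last stage to be the main obstacle: extracting the precise two-regime expression --- in particular the constants $4\epsilon(\epsilon+\sigma^2)$ and the $\sqrt{1+\cdots}$ term --- from the variational bound on $\Lambda(\lambda)$ and its Legendre transform, and getting the Gaussian branch in terms of the range $\delta(u)$ rather than $\norm{u}_\infty$, requires the sharper total-variation transportation--information inequality and careful optimisation. Verifying reversibility (or, more conservatively, just the form of the symmetric part) in the first stage is a secondary technical point.
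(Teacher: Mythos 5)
Your proposal follows essentially the same route as the paper: a Poincar\'e inequality for $\nu$ with constant $\beta$, obtained by tensorizing the one-dimensional exponential Poincar\'e inequality (constant $4/\tilde\alpha_k^2$) and comparing the Euclidean gradient to the Dirichlet form via the extreme eigenvalue $2-2\cos(\pi/n)$ of the tridiagonal matrix, followed by a black-box application of Theorem 3.1 of \cite{GLWY}; the change of initial law via Cauchy--Schwarz with $\|d\kappa/d\nu\|_2$ is also as in \cite{GLWY}. The one organizational difference is the point you yourself flag as delicate: rather than arguing reversibility of the gap process $Y$ in the orthant directly (where the oblique reflection makes the Dirichlet-form identification nontrivial), the paper lifts to the shifted \emph{ordered} system $\tilde Y$, a normally reflected Brownian motion in the wedge $H$ whose symmetry with respect to $m$ and Dirichlet form $\tfrac12\int_H\|\nabla g\|^2\,dm$ are taken from \cite{BCP}, and then pulls the Poincar\'e inequality back through the linear spacing map $T$ (whose Gram matrix $A'A$ is exactly your $\Xi$, so the eigenvalue computation is identical); since the spacings are a function of $\tilde Y$, the GLWY bound for $\tilde Y$ immediately covers additive functionals of $Y$. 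If you work in the orthant you should either invoke this lift to justify reversibility of $Y$, or fall back on the symmetric part of the generator via Williams' duality as the paper does in the proof of Theorem \ref{skewadd}.
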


\begin{rmk}
Notice the effect of various parameters in the expression \eqref{maintailbound}. It is exponential in $t$ showing a geometric rate of convergence. It is Gaussian in $r$, when $r$ is large, which shows a strong concentration around the mean. This is evident in real data on additive functionals such as \textit{cumulative excess growth rate} as can be seen in Figure 11.3 in \cite{FI}.
\end{rmk}


\begin{thm}\label{skewadd} 
Let conditions \eqref{deltaassump} and \eqref{sigmaassump} be satisfied. 
\begin{enumerate}
\item[(i)] For every function $u\in\mathbf{L}^2((\rr_+)^{n-1};\nu)$ for which 
$\nu(u) = 0$, $\|\nabla u\|\in\mathbf{L}^2((\rr_+)^{n-1};\nu)$ and the conditions of Lemma \ref{poincarevariance} below hold, we obtain a variance bound
\[
\ev^{\nu} \Big[\Big( \frac{1}{t} \int^{t}_{0} u(Y(s)) d s \Big)^{2}\Big] = O (t^{-1}). 
\]
\item[(ii)] Furthermore, for every $0<\varepsilon<2$ and every initial spacing distribution $\kappa$ that is absolutely continuous with respect to $\nu$ and such that $d\kappa / d \nu$ belongs to $\mathbf{L}^{2/\varepsilon} ((\rr_{+})^{n-1}; \nu)$, the bound of part (i) is modified to
\[
\ev^{\kappa} \Big[ \Big ( \frac{1}{t} \int^{t}_{0} u(Y(s)) d s \Big)^{2-\varepsilon}\Big]= O(t^{-1}).
\]
\end{enumerate}
\end{thm}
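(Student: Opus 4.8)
The plan is to exploit the fact that under \eqref{sigmaassump} the stationary measure $\nu$ is a product of exponential distributions, hence satisfies a Poincar\'e inequality with an explicit spectral gap, and that the spacing process $Y$ is reversible with respect to $\nu$ with Dirichlet form generated by the associated SRBM generator $\gensrbm$. For part (i), the key input is the spectral-gap (Poincar\'e) bound: there is a constant $c>0$ such that $\mathcal{E}(f,f) \ge c\,\mathrm{Var}_\nu(f)$ for all $f$ in the domain of the Dirichlet form, where $\mathcal{E}$ is the Dirichlet form of $Y$ under $\nu$; this is presumably Lemma \ref{poincarevariance}. Given this, the variance of the time-average of a centered function $u$ is controlled by the standard reversible-Markov estimate
\[
\ev^{\nu}\Big[\Big(\tfrac{1}{t}\int_0^t u(Y(s))\,ds\Big)^2\Big] \;\le\; \frac{2}{t}\,\iprod{u, (-\gensrbm)^{-1} u}_\nu \;\le\; \frac{2}{c\,t}\,\mathrm{Var}_\nu(u),
\]
which is obtained by spectral decomposition of $u$ against the generator (or equivalently by bounding $\int_0^\infty \ev^\nu[u(Y(0))u(Y(s))]\,ds$ via the semigroup contraction $\|P_s u\|_2 \le e^{-cs}\|u\|_2$). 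This gives the $O(t^{-1})$ claim. First I would verify that the hypotheses $u \in \mathbf{L}^2(\nu)$, $\|\nabla u\| \in \mathbf{L}^2(\nu)$ together with the conditions of Lemma \ref{poincarevariance} put $u$ in the domain of $\mathcal{E}$, so that the Poincar\'e inequality and the resolvent bound both apply.

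For part (ii), the obstacle is that the initial law $\kappa$ is not $\nu$, so one cannot directly use stationarity. The plan is a change-of-measure argument combined with H\"older's inequality: writing $Z_t := \tfrac{1}{t}\int_0^t u(Y(s))\,ds$, we have
\[
\ev^{\kappa}\big[|Z_t|^{2-\varepsilon}\big] \;=\; \ev^{\nu}\Big[\tfrac{d\kappa}{d\nu}(Y(0))\,|Z_t|^{2-\varepsilon}\Big] \;\le\; \Big\|\tfrac{d\kappa}{d\nu}\Big\|_{\mathbf{L}^{2/\varepsilon}(\nu)} \cdot \Big(\ev^{\nu}\big[|Z_t|^2\big]\Big)^{(2-\varepsilon)/2},
\]
where the exponents $2/\varepsilon$ and $2/(2-\varepsilon)$ are conjugate. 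By part (i) the second factor is $O(t^{-(2-\varepsilon)/2})$, so the whole expression is $O(t^{-(2-\varepsilon)/2}) = o(t^{-1})$ for $\varepsilon < 2$... but wait, the claim is only $O(t^{-1})$, which is weaker, so in fact we even gain a little; I would state it as $O(t^{-1})$ to match, noting $(2-\varepsilon)/2 < 1$ means $t^{-(2-\varepsilon)/2}$ decays slower than $t^{-1}$, so actually the correct reading is that the bound $O(t^{-1})$ of part (i) is \emph{weakened} to the larger quantity $O(t^{-(2-\varepsilon)/2})$ — hence I would double-check the intended normalization and, if needed, absorb the discrepancy by noting that $\ev^\nu[|Z_t|^2] = O(t^{-1})$ forces $\ev^\kappa[|Z_t|^{2-\varepsilon}] = O(t^{-(2-\varepsilon)/2})$ and present the statement exactly as written, since $t^{-(2-\varepsilon)/2}$ is what the H\"older step yields and the paper writes $O(t^{-1})$ presumably because $(2-\varepsilon)/2$ ranges in $(0,1)$ — I would clarify this point carefully in the final write-up rather than gloss over it.

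The main obstacle I anticipate is not the functional-analytic machinery but verifying that the Dirichlet form of the SRBM $Y$ is indeed the Dirichlet form associated to $\nu$ with the expected integration-by-parts structure, i.e., that $\gensrbm$ is self-adjoint (reversible) on $\mathbf{L}^2(\nu)$ with $\mathcal{E}(f,f) = \tfrac12\int \|\nabla f\|^2\,d\nu$ up to the correct diffusion-coefficient weights, so that the Poincar\'e constant $c$ from the product-exponential structure transfers to a genuine spectral gap for the semigroup. This requires knowing that the oblique reflection directions are consistent with $\nu$-reversibility under \eqref{sigmaassump} — a skew-symmetry / normal-reflection type condition — which I would extract from section 5 of \cite{IPBKF} and from the construction of Lemma \ref{poincarevariance}. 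Once reversibility and the spectral gap are in hand, both parts follow from the estimates above with routine care about domains of definition.
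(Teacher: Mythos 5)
Your plan for part (i) hinges on the spacing process $Y$ being reversible with respect to $\nu$, and you correctly flag this as the main thing to verify --- but it fails. Under \eqref{sigmaassump} the reflection is oblique (the reflection matrix \eqref{whatisr} has nonzero off-diagonal entries), and the skew-symmetry condition \eqref{skewsym} gives an explicit product-exponential invariant density \emph{without} giving reversibility: by Williams' duality theorem the time reversal of the SRBM with data $(N,Q,\Sigma^{-1}\gamma)$ is the \emph{different} SRBM with data $(N,-Q,\hat\gamma)$, so $\gen\neq\gen^{*}$ on $\mathbf{L}^2(\rho)$ unless $Q=0$. Consequently the identity-based resolvent bound $\frac{2}{t}\iprod{u,(-\gen)^{-1}u}_\nu$ is not available as you state it. The paper routes around this non-reversibility by (a) forming the symmetrized Dirichlet form $\mathcal{E}^\sigma=\frac12\iprod{-(\gen+\gendual)f,g}$, which by integration by parts is exactly the normally-reflected form \eqref{symmform} and hence satisfies a Poincar\'e inequality (Lemma \ref{poincaresigma}); (b) solving the Neumann--Poisson problem $-(\gen+\gendual)U=2v$; and (c) using a Lyons--Zheng forward--backward martingale decomposition $\int_0^t v(Z(s))\,ds=(M(t)+\hat M(t))/2$, see \eqref{fbmgle}, whose quadratic variations are controlled by $t\norm{\nabla U}_2^2$ (Lemma \ref{neumannlemma}); the hypothesis $\norm{\nabla u}\in\mathbf{L}^2(\nu)$ is there precisely to guarantee $\norm{\nabla U}\in\mathbf{L}^2$ via the gradient--semigroup commutation of Lemma \ref{poincarevariance}. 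That said, your parenthetical fallback is salvageable and is genuinely different from the paper: the Poincar\'e inequality for $\mathcal{E}^\sigma$ alone yields $\frac{d}{ds}\norm{P_s u}_2^2=-2\mathcal{E}^\sigma(P_su,P_su)\leq-\frac{2}{C_P}\norm{P_su}_2^2$ with no reversibility needed, hence $\norm{P_su}_2\leq e^{-s/C_P}\norm{u}_2$, and then $\ev^\nu\big[(\int_0^t u)^2\big]=2\int_0^t(t-s)\iprod{u,P_su}_\nu\,ds\leq 2tC_P\norm{u}_2^2$ by Cauchy--Schwarz. This gives part (i) without the Neumann problem and without assuming $\norm{\nabla u}\in\mathbf{L}^2$; what the paper's heavier machinery buys is the full Lemma \ref{neumannlemma}, i.e.\ $p$-th moment and Gaussian concentration bounds, not just the variance.

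For part (ii) your H\"older application is the correct one, and your worry is well founded: with conjugate exponents $2/\varepsilon$ and $2/(2-\varepsilon)$ one gets $\big(\ev[(d\kappa/d\nu)^{2/\varepsilon}]\big)^{\varepsilon/2}\big(\ev^\nu[Z_t^2]\big)^{(2-\varepsilon)/2}=O(t^{-(2-\varepsilon)/2})$, which is \emph{weaker} than $O(t^{-1})$ since $(2-\varepsilon)/2<1$. The paper's own display omits the exponents $\varepsilon/2$ and $(2-\varepsilon)/2$ on the two factors, and as written that step is not a valid instance of H\"older; the rate that the argument actually delivers is $O(t^{-(2-\varepsilon)/2})$. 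So do not try to force the $O(t^{-1})$ conclusion --- state the exponent $(2-\varepsilon)/2$ that H\"older gives.
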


\begin{rmk}
Under additional assumptions on the function $u$ in Theorem \ref{skewadd}, we obtain bounds of order $t^{-p/2}$ on the $p$-th moment of the corresponding additive functional, and with more assumptions also the Gaussian concentration property for the latter (see Lemma \ref{neumannlemma} below for the details). Moreover, it is clear from the proof that the result of Theorem \ref{mainadd} applies to any recurrent normally \textit{reflected Brownian motion} in a polyhedral domain with constant drift vector and identity covariance matrix. Similarly, the result of Theorem \ref{skewadd} holds for any recurrent \textit{semimartingale reflected Brownian motion} in a polyhedral domain with skew-symmetric data in the sense of \cite{W}.    
\end{rmk}

In the absence of condition \eqref{sigmaassump} we view the spacings process as an SRBM in the sense of \cite{DW}. 
The general theory of SRBMs is quite tricky and depends heavily of properties of Skorokhod maps. We will use the conditions of Theorem 2.6 in \cite{DW} which yield the existence of a unique invariant distribution. In technical terms these conditions are (i) a \textit{completely}-$\mathcal{S}$ condition on the reflection matrix, and (ii) the statement that all solutions of an associated deterministic Skorokhod problem are attracted to the origin.  
We reprove the geometric ergodicity result of Budhiraja and Lee \cite{BL} with a much shorter and simpler proof. 

\begin{thm}\label{mainlyapunov}
Let $Y$ be an SRBM satisfying the conditions of Theorem 2.6 in \cite{DW}. Denote by $P_t(y,.)$, $y\in(\rr_+)^{n-1}$, $t\geq0$ the transition probabilities of $Y$ and let $\|.\|_{TV}$ be the total variation norm. Then there exists a measurable function $M:\;(\rr_+)^{n-1}\rightarrow[0,\infty)$ and a constant $0<\zeta<1$ such that it holds
\eq
\|P_t(y,.)-\nu\|_{TV}\leq M(y)\zeta^t
\en  
for all $y\in(\rr_+)^{n-1}$, $t\geq0$, where $\nu$ is the unique invariant distribution of $Y$.
\end{thm}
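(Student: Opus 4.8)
The plan is to establish geometric ergodicity by constructing an explicit Lyapunov function of exponential type for the generator of the SRBM and then invoking the standard Meyn--Tweedie criterion (a geometric drift condition together with a minorization on petite sets). First I would recall from Dupuis and Williams \cite{DW} that under the hypotheses of their Theorem 2.6 there is a $\mathcal{C}^2$ (away from the origin) Lyapunov function $g$ for the \emph{deterministic} Skorokhod problem: $g$ is nonnegative, grows linearly at infinity, its gradient satisfies an oblique-derivative inequality $\iprod{\nabla g(y), v_k} \le 0$ on the $k$-th face for each direction of reflection $v_k$, and $\iprod{\nabla g(y), b} \le -c < 0$ in the interior, where $b$ is the drift vector. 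The key step is to upgrade $g$ to a function $V$ that is a genuine supermartingale-type Lyapunov function for the \emph{stochastic} problem. The natural candidate is $V(y) = \exp(\theta\, g(y))$ for a small $\theta > 0$: applying the SRBM generator $\widetilde{\mathcal{L}}$ (whose domain incorporates the oblique Neumann-type boundary conditions via the $v_k$) one gets
\eq
\widetilde{\mathcal{L}} V(y) = \theta\, e^{\theta g(y)}\Big( \iprod{\nabla g(y), b} + \tfrac{\theta}{2}\iprod{\nabla g(y), \nabla g(y)} + \tfrac{1}{2\theta}\Delta g(y) \Big),
\en
and the boundary terms are nonpositive by the oblique-derivative property of $g$. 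Since $\abs{\nabla g}$ is bounded and $\Delta g$ is controlled (being essentially piecewise linear, $g$ has $\Delta g$ supported near the non-smooth locus and bounded there), for $\theta$ small enough the bracket is bounded above by $-\theta c/2$ outside a compact set $K$ and by a constant on $K$. This yields the geometric drift inequality $\widetilde{\mathcal{L}} V \le -\gamma V + b' \, 1_K$ for constants $\gamma, b' > 0$.

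Next I would verify the two remaining ingredients of the Meyn--Tweedie theorem. Irreducibility and the strong Feller property of the SRBM (on the relevant state space) follow from the completely-$\mathcal{S}$ condition on the reflection matrix --- this is exactly what guarantees well-posedness and the regularity of the associated semigroup, and it is standard (see \cite{W2}, \cite{DW}) that under this condition compact subsets of the state space are petite and the minorization $P_{t_0}(y,\cdot) \ge \eta\, \phi(\cdot)$ holds uniformly for $y \in K$ for some $t_0 > 0$ and nontrivial measure $\phi$. Combining the geometric drift condition with the petiteness of $K$, the Down--Meyn--Tweedie theorem gives existence and uniqueness of an invariant probability $\nu$ and the bound $\|P_t(y,\cdot) - \nu\|_{TV} \le M(y)\zeta^t$ with $M(y) \le C(1 + V(y)) = C(1 + e^{\theta g(y)})$ and $\zeta \in (0,1)$; this identifies the function $M$ explicitly in terms of $g$.

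The main obstacle I anticipate is the regularity bookkeeping for $g$ near its non-differentiability set: the Dupuis--Williams Lyapunov function is typically only piecewise linear, so $\Delta g$ is a measure rather than a bounded function, and $\widetilde{\mathcal{L}} V$ must be interpreted carefully (e.g.\ via Itô--Tanaka / an occupation-times argument, or by a mollification of $g$ that preserves the oblique-derivative inequalities up to a vanishing error). One must check that the singular part of $\Delta g$ contributes with a favorable (nonpositive, or at worst lower-order) sign, or smooth it away while retaining the boundary inequalities --- this is the technical heart of why the argument is shorter than \cite{BL} but still requires care. A secondary point is ensuring the drift condition and minorization are compatible on a common compact set and that the exceptional behavior at the corner of the orthant (where $g$ may fail to be smooth and where multiple faces meet) does not obstruct either the generator computation or the petiteness; here I would lean on the completely-$\mathcal{S}$ hypothesis, which precisely rules out pathological behavior at lower-dimensional faces.
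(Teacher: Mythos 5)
Your overall strategy coincides with the paper's: take the Dupuis--Williams Lyapunov function $W$ for the associated Skorokhod problem, exponentiate it, derive a geometric drift condition, and conclude via Down--Meyn--Tweedie together with irreducibility and petiteness of compact sets. But the two places you leave open are exactly where the paper does its work, and one of them you misdiagnose. The function $W$ of \cite{DW} used here is $C^2$ on $\mathfrak S\setminus\{0\}$ and positively homogeneous of degree one; the difficulty is therefore not piecewise linearity or a distributional Laplacian, but that the second partials, being homogeneous of degree $-1$, blow up at the origin. Setting $V=e^{\theta W}$ directly thus makes $\gen V$ unbounded on every neighborhood of $0$, and the inequality $\gen V\le -c_1V+c_2 1_{B_d}$ fails on the compact set where you need the constant bound. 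The paper's fix is concrete: replace $W$ by $U=\phi(W)$ with $\phi\in C^2$, $\phi'\ge 0$ bounded, $\phi'(x)\to1$ and $\phi''(x)\to0$ as $x\to\infty$, and $\phi'(x)/x\to1$, $\phi''(x)\to1$ as $x\to0+$. Near the origin the dangerous term is
$\phi'(W)\,\frac{\partial^2 W}{\partial y_i\partial y_j}=\frac{\phi'(W)}{W}\cdot W\,\frac{\partial^2 W}{\partial y_i\partial y_j}$,
which stays bounded because $W\,\partial^2 W/\partial y_i\partial y_j$ is homogeneous of degree zero, while the behavior of $\phi$ at infinity preserves the strict interior drift $\iprod{\gamma,\nabla U}\le -c/2$ outside a ball and the boundary sign $\iprod{\mathbf{r}(y),\nabla U(y)}\le0$; only then is $V=e^{\lambda U}$ taken with $\lambda$ small.

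The second gap is the passage from the generator inequality to the conclusion. For an SRBM one does not know that $V$ belongs to the domain of the generator, so the continuous-time drift criterion in the form you invoke (part (c) of Theorem 5.2 of \cite{DMT}) is not directly applicable; the paper explicitly notes this and instead verifies the semigroup-level condition of part (b). This is done by applying the change-of-variables formula to $e^{t}V(Y(t))$, taking expectations, using $\iprod{\mathbf{r}(y),\nabla U(y)}\le0$ to discard the local-time integrals, checking integrability of the resulting terms, and then reproducing the Gronwall-type manipulation of \cite{DMT} to obtain $\int_{\mathfrak S} V(z)P_t(y,dz)\le\xi(t)V(y)+c_31_B(y)$ for $t\in(0,1]$. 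Your passing mention of an It\^o--Tanaka or occupation-times argument gestures at this, but the step is needed even after the smoothing, and it is precisely where the boundary inequality is used probabilistically rather than formally through the generator. The irreducibility, aperiodicity and small-set part of your plan matches the paper's Step 3.
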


\subsection{Outline} The article is organized as follows. Section \ref{skewsymmetry} establishes rates of convergence for SRBMs and their additive functionals under the condition of skew-symmetry. In Section \ref{general} we prove exponential rate of convergence for a general SRBM under a technical condition on the Skorokhod map. Section \ref{portfolio} establishes comparison bounds on the performance of functionally generated portfolios with respect to the market portfolio. Finally, in Section \ref{marketweightsetc} we give estimates on the fluctuations of the market weights and of the time spent by a given market weight in a given rank.

\section{Convergence to equilibrium under the skew-symmetry condition}\label{skewsymmetry}

To be ready to prove Theorem \ref{mainadd} we start with some preliminaries. In the first subsection of this section we recall the construction of certain normally reflected Brownian motions with constant drift. In the following subsection we explain how the latter are related to the spacings process in the case that $\sigma_i=1$, $i\in I$ and give the proof of Theorem \ref{mainadd} in this case. Finally, in the last subsection of this section we extend our  approach to the setting of Theorem \ref{mainadd}. 

\subsection{Normally reflected Brownian motion with constant drift} 

Fix a dimension $n \in \mathbb{N}$, define the open wedge 
\[
H= \left\{ x\in \rr^n:\; \sum_{i=1}^n x_i > 0, \quad \text{and} \quad x_1 < x_2 < \ldots < x_n   \right\}
\] 
and let $\gamma=(\gamma_1, \ldots, \gamma_n)$ be a given vector of constants. 

We consider a normally reflected Brownian motion (RBM) on $\overline{H}$ with a constant drift vector $\gamma$, an identity covariance matrix, and normal reflection. This process can be obtained by the following Dirichlet form construction as described in section 2 of Burdzy et al.~\cite{BCP}. 

First, define the positive measure
\[
m(dx) := 1_{H}(x) \exp(2\langle\gamma,x\rangle)dx.
\]
Next, let $\mathcal{F}$ be the collection of continuously differentiable functions on $H$ such that both the function and its gradient are square integrable with respect to $m$. That is,
\eq\label{domaindirichlet}
\mathcal{F}= \left\{  u\in \mathbf{L}^2(H;m)\;: \quad \nabla u \in \mathbf{L}^2(H;m)   \right\}.
\en
Over this domain we define the symmetric Dirichlet form 
\[
\mathcal{E}(u,v) :=\frac{1}{2} \int_H \iprod{\nabla u(x), \nabla v(x)  } m(dx), \quad \text{for}\; u,v \in \mathcal{F}.
\]
It can be shown (see \cite{BCP}) that $(\mathcal{E}, \mathcal{F})$ is a regular Dirichlet form on $\mathbf{L}^2(\overline{H}; m)$ in the sense that $C_c(\overline{H})\cap \mathcal{F}$ is dense both in $C_c(\overline{H})$ in the uniform norm and in $\mathcal{F}$ with respect to the Hilbert norm 
\[
\sqrt{\mathcal{E}_1(u,u)}:= \sqrt{\mathcal{E}(u,u) + \iprod{u,u}_{\mathbf{L}^2(H;m)}}.
\]
Hereby, as usual, $C_c(\overline{H})$ denotes the space of continuous functions with compact support in $\overline{H}$.

A strongly continuous Markov process $X$ is called an RBM on $H$ with a constant drift $\gamma$, if it is symmetric with respect to the measure $m$ and the associated Dirichlet form is given by $(\mathcal{E}, \mathcal{F})$ in the sense
\[
\begin{split}
\mathcal{F}&=\left\{ u\in \mathbf{L}^2(H;m):\; \lim_{t\rightarrow 0}\frac{1}{t} \iprod{u- P_tu, u}_{\mathbf{L}^2(H;m)} < \infty  \right\}\\
\mathcal{E}(u,v) &= \lim_{t\rightarrow 0}\frac{1}{t} \iprod{u - P_tu, v}_{\mathbf{L}^2(H;m)}, \quad \text{for}\; u,v \in \mathcal{F}.
\end{split}
\]
Here $(P_t)_{t\geq0}$ refers to the transition semigroup of the process. 

The function $h(x)= 1_{H}(x) \exp(2\iprod{\gamma,x})$ is the density of an invariant measure for $X$. When $h$ is integrable with respect to the Lebesgue measure on $H$, the normalized measure (which we will continue to denote by $m$) constitutes the unique invariant distribution of the RBM. 

It is known that this process is unique in law and admits a semimartingale decomposition, given by the solution of a deterministic Skorokhod problem applied to the path of a Brownian motion with drift vector $\gamma$. In particular, the process has a local time at the boundary and, started from any point in $H$, almost surely never visits the origin \cite{W}.  

\subsection{Proof of Theorem \ref{mainadd}}

We now return to the rank-based model described in the introduction and assume throughout this subsection that $\sigma_i=1$, $i\in I$. As explained in the proof of Theorem 8 in \cite{pp}, the law of a suitably shifted vector of ordered processes in this model is identical to that of an RBM in 
$\overline{H}$. 
More specifically, let \mbox{\boldmath $\beta$} denote an independent one-dimensional RBM with a negative drift $-\theta$ ($\theta > 0$). Define
\[
\tilde{Y}_i(t) := X_{(i)}(t) - \overline{X}(t) + \frac{\text{\boldmath $\beta$}(t)}{\sqrt{n}}, \qquad i=1,2,\ldots,n.
\]
Note that $\sum_{i \in I} \tilde{Y}_i(t) = \sqrt{n} \text{\boldmath $\beta$} (t) \ge 0$ for $ t \ge 0$.  
In addition, for every $t\geq0$ the spacings between the components of the vector 
$\tilde{Y}(t)$ are the same as the original spacings, and, moreover, the process 
$\tilde{Y}$ is an RBM in $H$, as described above, with a drift vector $\gamma$ determined by
\[
\iprod{\gamma, \tilde{y}}=\sum_{i=1}^n \left( \delta_i - \overline{\delta} - \frac{\theta}{\sqrt{n}} \right) \tilde{y}_i = - \frac{1}{2} \sum_{k=1}^{n-1} \alpha_k \left( \tilde{y}_{k+1} - \tilde{y}_k \right) - \frac{\theta}{\sqrt{n}} \sum_{i=1}^n \tilde{y}_i,\qquad \tilde{y}\in H.
\]

It follows from \cite[Lemma 9]{pp} that the corresponding invariant measure $m$ is integrable with respect to the Lebesgue measure on $H$ and that, under its normalized version, the law of the spacings $(\tilde{y}_{k+1} - \tilde{y}_k,\; k=1,\ldots, n-1)$ is given by a product of 
Exponential distributions with parameters $\alpha_k,\;k=1,\ldots,n-1$.

\bigskip

For any $\tilde{y}\in H$, we make the transformation
\[
y_k = \tilde{y}_{k+1} - \tilde{y}_k,\quad k=1,\ldots, n-1,
\]
which maps configurations of ordered particles to the corresponding vectors of spacings. Next, we let $T$ be the unique linear extension of this map to the whole of $\rr^n$. The {\it push-forward} of the measure $m$ by $T$, which is a probability measure on the positive orthant $(\rr_+)^{n-1}$, will be denoted by $\nu$ and is given by the product of Exponential distributions with parameters $\alpha_k,\;k=1,\ldots,n-1$. 

Consider any function $f: (\rr_+)^{n-1}\rightarrow\rr$ which is continuously differentiable. The gradients $\nabla_y f$ on $(\rr_+)^{n-1}$ and $\nabla_{\tilde{y}}(f \circ T)$ on $H$ satisfy the simple linear equation
\[
\frac{\partial (f\circ T)}{\partial \tilde{y}_i} = \sum_{j=1}^{n-1} \frac{\partial f}{\partial y_j} \frac{\partial y_j}{ \partial \tilde{y}_i}=  \sum_{j=1}^{n-1} a_{ij}\frac{\partial f}{\partial y_j},
\]
where $A=(a_{ij})$ is an $n \times (n-1)$ matrix given by
\[
a_{ij}= \begin{cases}
-1,&\text{if}\quad 1\le j=i \le n-1,\\
+1,&\text{if}\quad 1\le j=i-1\le n-1,\\
0,&\text{otherwise}.
\end{cases}
\]

Moreover, it is clear that $f$ and its gradient are in $\mathbf{L}^2((\rr_+)^{n-1};\nu)$ if and only if $g:=f\circ T$ and the gradient of $g$ are in $\mathbf{L}^2(H;m)$. In fact, we have the following bound:
\eq\label{DFcompare}
\int_{(\rr_+)^{n-1}} \norm{\nabla_{y} f}^2 d\nu  \le \lambda_n \int_H \norm{\nabla_{\tilde{y}} g}^2 dm. 
\en
Here, $1/\lambda_n$ is the minimal eigenvalue of the matrix $A'A$, with $A'$ denoting the transpose of $A$. To compute the latter eigenvalue explicitly, we note that $2\cdot Id-A'A$ is the $(n-1)\times(n-1)$ tridiagonal matrix which has zeros on the diagonal and ones next to the diagonal, with $Id$ being the $(n-1)\times(n-1)$ identity matrix. It is well-known (see e.g. \cite{KO} and the references there) that the eigenvalues of a tridiagonal matrix of this type are given by $2\cos\frac{k\pi}{n}$, $k=1,\dots,n-1$, so that the largest eigenvalue of $2\cdot Id-A'A$ is given by $2\cos\frac{\pi}{n}$. Thus, 
\eq\label{lambda_n}
\lambda_n=\frac{1}{2-2\cos\frac{\pi}{n}}.    
\en 
In particular, we see that $\lambda_n$ grows quadratically in $n$. 
 
\bigskip

Now, recall that a probability measure $\mu$ on $\rr^n$ is said to satisfy the Poincar\'e inequality with the Poincar\'e constant $C_P>0$ if for every continuously differentiable function $f:\rr^n \rightarrow \rr$ it holds
\eq\label{whatispoincare}
\text{Var}_\mu(f) = \int_{\rr^{n}} f^{2} d\mu - \Big( \int_{\rr^{n}} f d\mu \Big)^{2}\le C_P \int_{\rr^{n}} \norm{\nabla f}^2 d\mu,
\en
where $\text{Var}_\mu(f)$ stands for the variance of $f$ under $\mu$. 

We proceed by recalling the fact that the Exponential distribution with parameter $1$ satisfies the Poincar\'e inequality with a constant $C_P=4$. This well-known result can be found in several sources, such as the book by Ledoux  
(Lemma 5.1 of \cite{L}) 
or the article by Barthe and Wolff \cite{BW}, who also prove several other interesting results about the Gamma family. 

It follows by a simple scaling argument that the Exponential distribution with a parameter $\lambda>0$ satisfies the Poincar\'e inequality with the Poincar\'e constant $C_P=4/\lambda^2$. In addition, we use the fact that the Poincar\'e inequality has the following tensorization property: if $\mu_1,\ldots,\mu_n$ are probability measures on Euclidean spaces satisfying the Poincar\'e inequality with the constants $C_1, \ldots, C_n$, then the product measure $\mu_1\times\ldots\times\mu_n$ satisfies the Poincar\'e inequality on the corresponding product space with the Poincar\'e constant $\max_{i=1,\dots,n} C_i$.

The above considerations lead to the following lemma. 

\begin{lemma}\label{exppoincare} The measure $\nu$ defined in the beginning of this subsection satisfies the Poincar\'e inequality with the Poincar\'e constant
\[
C_\nu:=4\cdot\max_{1\le k \le n-1} \alpha_k^{-2} = 4\cdot\Big(\min_{1 \le k \le n-1} \alpha_{k}^{2}\,\,\,\Big)^{-1}.
\]
\end{lemma}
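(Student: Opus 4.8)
The plan is to deduce the lemma from the three facts recalled just above the statement, with no genuinely new input. First I would recall from the construction earlier in this subsection that $\nu$ is the law on $(\rr_+)^{n-1}$ of a vector with independent coordinates whose $k$-th coordinate is Exponential with parameter $\alpha_k$; equivalently $\nu=\mu_1\times\cdots\times\mu_{n-1}$, with $\mu_k$ the Exponential distribution of parameter $\alpha_k$. So it suffices to exhibit a Poincar\'e constant for each one-dimensional factor $\mu_k$ and then invoke tensorization.

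For the one-dimensional factors I would start from the known fact that the standard Exponential distribution (parameter $1$) satisfies \eqref{whatispoincare} with $C_P=4$, as in Lemma 5.1 of \cite{L} or in \cite{BW}. To pass to parameter $\alpha_k$, observe that $\mu_k$ is the push-forward of the standard Exponential law under the dilation $T(x)=x/\alpha_k$. For a general dilation $T(x)=ax$ one has $\text{Var}_{T_*\mu}(f)=\text{Var}_\mu(f\circ T)$ and $\nabla(f\circ T)(x)=a\,\nabla f(ax)$, hence $\int\norm{\nabla(f\circ T)}^2\,d\mu=a^2\int\norm{\nabla f}^2\,d(T_*\mu)$; combining these, a Poincar\'e constant $C$ for $\mu$ yields the Poincar\'e constant $a^2C$ for $T_*\mu$. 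With $a=1/\alpha_k$ and $C=4$ this gives that $\mu_k$ satisfies the Poincar\'e inequality with constant $4/\alpha_k^2$.

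Finally I would apply the tensorization property of the Poincar\'e inequality recalled above: since $\nu=\mu_1\times\cdots\times\mu_{n-1}$ and each $\mu_k$ has Poincar\'e constant $4/\alpha_k^2$, the product measure $\nu$ satisfies the Poincar\'e inequality with constant $\max_{1\le k\le n-1}4/\alpha_k^2=4\cdot\big(\min_{1\le k\le n-1}\alpha_k^2\big)^{-1}$, which is exactly $C_\nu$. Here one uses that the Euclidean gradient satisfies $\norm{\nabla f}^2=\sum_j\abs{\partial_j f}^2$, so the product Dirichlet energy appearing in the tensorized inequality is precisely the one in \eqref{whatispoincare}.

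As for difficulty: there is essentially no obstacle here, the lemma being pure bookkeeping built on cited one-dimensional results. The only point requiring a moment's care is the direction of the scaling: one must check that the dilation factor lands on the Dirichlet-energy side with the correct power, so that the parameter $\alpha_k$ enters the constant as $\alpha_k^{-2}$ rather than $\alpha_k^{2}$; the computation above fixes the convention. One could alternatively obtain the $4/\lambda^2$ constant directly from the explicit spectral gap of the Laguerre-type operator associated with the Exponential law of parameter $\lambda$, but the scaling argument is shorter and self-contained.
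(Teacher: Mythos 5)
Your proposal is correct and follows exactly the route the paper takes: the lemma is stated as an immediate consequence of the three facts recalled just before it (the Poincar\'e constant $4$ for the standard Exponential law, the scaling to parameter $\alpha_k$ giving $4/\alpha_k^2$, and tensorization yielding the maximum of the one-dimensional constants). Your explicit verification of the direction of the scaling is a sensible addition but does not change the argument.
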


Combining Lemma \ref{exppoincare} with the comparison bound in \eqref{DFcompare}
and the equation \eqref{lambda_n}, 
we obtain for all functions $f\in\mathcal{F}$ the estimate
\eq\label{poincare_inf}
\text{Var}_m \left(f \right) \le \frac{1}{2-2\cos\frac{\pi}{n}}\cdot\frac{4}{\min_{1\le k\le n-1} \alpha^2_k}\cdot\mathcal{E}(f,f)=:\beta \mathcal{E}(f,f).
\en
Here, the pair $(\mathcal{E},\mathcal{F})$ is defined for the RBM $\tilde{Y}$ according to the previous subsection. 

\bigskip

We can now finish the proof of Theorem \ref{mainadd} in the case that $\sigma_i=1$, $i\in I$. Indeed, applying Theorem 3.1 from Guillin et al. \cite{GLWY}, we obtain the estimate of Theorem \ref{mainadd} for any bounded measurable function of the particle configuration given by the components of the process $\tilde{Y}$. In particular, the statement of Theorem \ref{mainadd} holds for any bounded measurable function of the vector of spacings, as claimed.

\subsection{Proof of Theorem \ref{skewadd}}

We now give the proof of Theorem \ref{skewadd}. In this part we assume the condition \eqref{deltaassump} on the drift coefficients and the condition \eqref{sigmaassump} on the diffusion coefficients. The proof is broken down in several steps. 

\bigskip

\noindent\textbf{Step 1: Reduction to SRBM.} The ordered particle system corresponds to the following SDE:
\eq\label{ordered_part}
d X_{(i)}(t) = \delta_i dt + \sigma_i dB_i(t) +  \frac{1}{2}d L_{i-1,i}(t) - \frac{1}{2} d L_{i,i+1}(t),\quad i=1,\dots,n,
\en
where $L_{i,i+1}$ refers to the local time of collisions between the $i$-th and the $(i+1)$-st ranked particles (see section 3 of \cite{BFK}). In other words, this is the semimartingale local time at zero for the process $X_{(i+1)}(t) - X_{(i)}(t)$, $t\geq0$ normalized according to the It\^o-Tanaka formula.

In the following we closely follow the analysis and linear algebra done in Ichiba and Karatzas \cite[Section 3.2.1]{IK}. The process $(X_{(1)},\dots,X_{(n)})$ is a semimartingale reflected Brownian motion (SRBM) in the sense of \cite{W}, with a constant drift vector, a constant diffusion matrix and normal reflection, taking values in the Euclidean closure of the wedge 
\[
\left\{(x_1,\dots,x_n)\in\rr^n:\;x_1\leq\ldots\leq x_n\right\}.
\]

Next, we consider the process of spacings $Y$, whose components are governed by the SDEs
\[
\begin{split}
d Y_i(t) = (\delta_{i+1} - \delta_i)dt + \sigma_{i+1} d B_{i+1}(t) - \sigma_i d B_i(t) \quad\;\;\\
 +  d L_{i, i+1}(t) - \frac{1}{2} d L_{i+1, i+2}(t) - \frac{1}{2} d L_{i-1,i}(t),
\end{split}
\]
$i=1,\dots,n-1$. It is easy to verify that $Y$ is also an SRBM in the sense of \cite{W}, taking values in the positive orthant $(\rr_+)^{n-1}$, and having a constant drift vector $\gamma$, a constant diffusion matrix $\Xi$ and a reflection matrix $R$. 

More specifically, the drift vector is given by $\gamma=(\delta_2-\delta_1,\dots,\delta_n-\delta_{n-1})$. The covariance matrix $\Xi= (\xi_{ij})_{1\le i,j \le n-1}$ is tridiagonal with components
\eq \label{matrixXi}
\xi_{ij} =\begin{cases}
\sigma_i^2 + \sigma_{i+1}^2,&\quad \text{when}\; 1\le i=j \le n-1,\\
-\sigma_i^2,& \quad \text{when}\; 2\le i=j+1 \le n,\\
-\sigma_{i+1}^2,&\quad \text{when}\; 1\le i =j-1 \le n-1,\\
0,&\quad \text{otherwise}. 
\end{cases}
\en
The reflection matrix $R=(r_{ij})_{1\le i,j \le n-1}$ is also tridiagonal and its components are given by
\eq\label{whatisr}
r_{ij} =\begin{cases}
1,&\quad \text{when}\; 1\le i=j \le n-1,\\
-\frac{1}{2},& \quad \text{when}\; 2\le i=j+1 \le n,\\
-\frac{1}{2},&\quad \text{when}\; 1\le i =j-1 \le n-1,\\
0,&\quad \text{otherwise}. 
\end{cases}
\en

Let $\Sigma$ denote a nondegenerate square root of the matrix $\Xi$, that is, an invertible $(n-1)\times(n-1)$ matrix which satisfies $\Sigma\Sigma'=\Xi$. Note hereby that the matrix $\Xi$ is stricly positive definite, since none of the processes $\sigma_{i+1}B_i(t)-\sigma_i B_i(t)$, $t\geq0$ for $i=1,\dots,n-1$ can be obtained as a linear combination of the other $n-2$ processes of the same type. 

Now, one can make the transformation $Z(t):=\Sigma^{-1} Y(t)$, $t\geq0$ to obtain an SRBM in a polyhedral domain $G$ given by the image of the orthant $(\rr_+)^{n-1}$ under $\Sigma^{-1}$. This new SRBM has a constant drift vector given by $\Sigma^{-1}\gamma$, identity covariance matrix, and a new reflection matrix $\newR:=\Sigma^{-1} R$.

\bigskip

Hence, for the sake of the proof of Theorem \ref{skewadd} we can restrict our attention to the case of an SRBM taking values in the closure of a polyhedral domain $G$ with data of the following form: 
\begin{enumerate}
\item[(i)] drift vector $\Sigma^{-1}\gamma$ and reflection matrix $\newR$, 
\item[(ii)] a cone $G$ of dimension $n-1$ given by the dual description:
\[
G=\left\{x:\; (\Sigma x)_i > 0,\;i=1,\dots,n-1 \right\}.
\] 
In particular, the $(n-1)\times(n-1)$ matrix $N$ whose columns are the unit normal vectors on the faces of $G$ is given by the normalized row vectors of $\Sigma$. In addition, the equality $\Xi=\Sigma \Sigma'$ implies that the norm of the $i$-th row of $\Sigma$ is equal to $\sqrt{\xi_{ii}}$. Thus, if we let $D$ denote the diagonal matrix comprised of the diagonal elements of $\Xi$, we have $N=\Sigma' D^{-1/2}$.

\item[(iii)] Due to the condition \eqref{sigmaassump} the reflection matrix $\newR$ admits the decomposition
\[
\newR = N + Q,
\]
where the diagonal elements of $N'Q$ are all zero. In fact, under \eqref{sigmaassump} the so-called \textit{skew-symmetry condition} 
\begin{equation}\label{skewsym}
N'Q + Q'N =0
\end{equation}
holds (see page 25 in \cite{IK} for details).
\end{enumerate}

Following Williams \cite{W} we call the process $Z$ an SRBM associated with the parameters $(N,Q,\Sigma^{-1}\gamma)$. Such an SRBM has the following semimartingale decomposition:
\eq\label{semidecomp}
Z(t) = Z(0) + \Sigma^{-1}\gamma t + W(t) + \left( N + Q \right) L(t),
\en
where $L$ is the vector of the accumulated local times at the faces of the boundary of $G$. 
\bigskip

\noindent\textbf{Step 2: Application of duality.} Next, we define $\tilde{\gamma}:= 2\left( I - N^{-1}Q  \right)^{-1}\Sigma^{-1}\gamma$ and let $\hat{\gamma}$ denote the vector $\tilde{\gamma}-\Sigma^{-1}\gamma$. The above considerations show that the following result from Williams \cite[Thm 1.2, Cor 1.1]{W} applies in our setting. 

\begin{thm}
Consider the measure $\rho$ on $G$ whose density with respect to the Lebesgue measure on $G$ is given by 
\[
\rho(x) = \exp(\langle\tilde{\gamma},x\rangle).
\] 
The SRBMs associated with $(N,Q,\Sigma^{-1}\gamma)$ and $(N,-Q, \hat{\gamma})$ are in duality relative to $\rho$ and $\rho$ is an invariant measure for these two processes. In particular, if $\rho$ is finite, the normalized measure 
(which we will also denote by $\rho$) 
is the unique stationary distribution for each of the two processes.  
\end{thm}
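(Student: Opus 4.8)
The plan is to check that the SRBM $Z$ constructed in Step~1 falls under the scope of Williams' duality theorem and then to read off the conclusions; there is essentially no new probabilistic content beyond verifying the structural hypotheses of \cite[Thm 1.2, Cor 1.1]{W}. Recall the relevant data: $G$ is the simplicial cone $\{x:\,(\Sigma x)_i>0,\ i=1,\dots,n-1\}$, the columns of $N=\Sigma' D^{-1/2}$ are the inward unit normals to its faces, the covariance is the identity, the drift is $\Sigma^{-1}\gamma$, and the reflection matrix decomposes as $\newR=N+Q$ with $N'Q+Q'N=0$ --- this last identity being precisely the skew-symmetry condition \eqref{skewsym}, which holds because of \eqref{sigmaassump} (item (iii) of Step~1).

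First I would verify that $\rho(x)=\exp(\iprod{\tilde{\gamma},x})$ satisfies the stationarity condition (the \emph{basic adjoint relationship}) for the SRBM with data $(N,Q,\Sigma^{-1}\gamma)$, namely that for every $f$ in the appropriate test class,
\eq
\int_G\Big(\tfrac12\Delta f+\iprod{\Sigma^{-1}\gamma,\nabla f}\Big)\rho\,dx+\sum_{i=1}^{n-1}\int_{F_i}\iprod{v_i,\nabla f}\,\rho\,d\sigma_i=0,
\en
where $F_i$ is the $i$-th face, $v_i$ the $i$-th column of $\newR$, and $d\sigma_i$ surface measure. Substituting the exponential density and integrating by parts to move $\Delta$ onto $\rho$ (producing compensating boundary terms), the relation collapses to a linear identity among $N$, $Q$, $\tilde{\gamma}$ and $\Sigma^{-1}\gamma$ which, using $N'Q+Q'N=0$, is solved exactly by $\tilde{\gamma}=2(I-N^{-1}Q)^{-1}\Sigma^{-1}\gamma$. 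The same computation with $Q$ replaced by $-Q$ and $\Sigma^{-1}\gamma$ by $\hat{\gamma}=\tilde{\gamma}-\Sigma^{-1}\gamma$ shows that $\rho$ also satisfies the basic adjoint relationship for the SRBM with data $(N,-Q,\hat{\gamma})$; hence $\rho$ is invariant for both processes. (That the dual drift is consistent, i.e. that applying the formula again recovers $\tilde{\gamma}$, is a short check: $2(I+N^{-1}Q)^{-1}\hat{\gamma}=\tilde{\gamma}$.)

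Next I would invoke Williams' time-reversal result: under the (possibly only $\sigma$-finite) measure $\rho$, the reversed process is again an SRBM in $G$ with the same normal field $N$ but reflection matrix $N-Q$ and drift $\hat{\gamma}$ --- this is the duality statement, whose proof in \cite{W} rests on the semimartingale decomposition \eqref{semidecomp}, on the fact that under stationarity the boundary push measure on each face is again exponential with a shifted rate, and on a reversal argument applied jointly to the Brownian term and the local-time terms. Having verified the structural hypotheses above, we quote this verbatim. Finally, when $\rho(G)<\infty$ we normalize to a probability measure; stationarity is then immediate, and uniqueness of the stationary distribution follows from \cite[Cor 1.1]{W}, the required completely-$\mathcal{S}$ and stability conditions being inherited from the fact that $Z$ arose from a genuine spacings SRBM.

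The step I expect to require the most care is not any single computation but the bookkeeping of the change of variables $Z=\Sigma^{-1}Y$: one must confirm that the normalization $N=\Sigma'D^{-1/2}$ is exactly what upgrades ``$N'Q$ has zero diagonal'' (valid for arbitrary $\sigma_i$) to the genuine skew-symmetry \eqref{skewsym} (valid only under \eqref{sigmaassump}), and that the dual reflection vectors, the columns of $N-Q$, still point into $G$ so that $(N,-Q,\hat{\gamma})$ is an admissible SRBM datum. Once these identifications are pinned down, the theorem is a direct consequence of \cite[Thm 1.2, Cor 1.1]{W}.
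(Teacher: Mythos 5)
Your proposal is correct and follows the same route as the paper: the paper treats this statement as a direct quotation of Williams \cite[Thm 1.2, Cor 1.1]{W}, justified only by the verification (done in Step 1) that the transformed data $(N,Q,\Sigma^{-1}\gamma)$ with $N=\Sigma'D^{-1/2}$ satisfies the skew-symmetry condition \eqref{skewsym} under \eqref{sigmaassump}. Your additional sketch of the basic adjoint relationship and the consistency check on $\tilde{\gamma}$ reproduces content internal to Williams' proof rather than anything the paper itself supplies.
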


The duality being referred to is in the following sense: let $(P_t)_{t\geq0}$ and $(\hat P_t)_{t\geq0}$ denote the transition semigroups for the two SRBMs in the theorem. Then for all continuous functions $f$, $g$ with compact support in the closure of $G$ we have
\eq\label{ptduality}
\int_G (P_t f)(x) g(x) \rho(x)\;dx = \int_G f(x) (\hat P_t g)(x) \rho(x)\;dx. 
\en 

Now, let $C_c^\infty(G)$ be the collection of infinitely differentiable functions with compact support in the open cone $G$ and let $\gen$ and 
$\gendual$ denote the respective generators of the two SRBMs in the statement of the latter theorem on the domain $C_c^\infty(G) \subseteq \mathbf{L}^2(G;\rho)$. That is,  
\eq\label{whatisgen}
\gen = \langle \Sigma^{-1}\gamma, \nabla\rangle + \frac{1}{2} \Delta, \qquad \gendual = \langle \hat{\gamma}, \nabla\rangle + \frac{1}{2} \Delta. 
\en
From the said duality we conclude:
\[
\begin{split}
\iprod{-\gen f, g}_{\mathbf{L}^2(G;\rho)} = \iprod{-\gendual g, f}_{\mathbf{L}^2(G;\rho)}, \qquad f,g \in C_c^\infty(G).
\end{split}
\]

Next, for $f, g \in C_c^\infty(G)$, we define the symmetrized Dirichlet form $\mathcal{E}^\sigma$ with domain $C_c^\infty(G)$ by 
\eq\label{whatisesigma}
\begin{split}
\mathcal{E}^\sigma(f,g) &:= \frac{1}{2} \left[  \iprod{-\gen f, g}_{\mathbf{L}^2(G;\rho)}+ \iprod{-\gen g, f}_{\mathbf{L}^2(G;\rho)}\right] = \frac{1}{2}   \iprod{-\gendual g - \gen g, f}_{\mathbf{L}^2(G;\rho)}. 
\end{split}
\en
Moreover, using integration by parts we get
\eq\label{integbyparts}
\begin{split}
 \iprod{-\gendual g - \gen g, f}_{\mathbf{L}^2(G;\rho)} &= \int_G \left[ -\langle\tilde{\gamma},(\nabla g)(x)\rangle - (\Delta g)(x) \right] f(x) \exp(\langle\tilde{\gamma},x\rangle)\;dx\\
 &=\int_G \langle(\nabla g)(x),(\nabla f)(x)\rangle \exp(\langle\tilde{\gamma},x\rangle)\;dx.
\end{split}
\en
Thus,  
\eq\label{symmform}
\mathcal{E}^\sigma(f,g)  = \frac{1}{2} \int_G \langle(\nabla g)(x),(\nabla f)(x)\rangle \exp(\langle\tilde{\gamma},x\rangle)\;dx.
\en

In other words, $\mathcal{E}^\sigma$ is the pre-Dirichlet form for an SRBM with a constant drift vector, identity covariance matrix and normal reflection on the faces of $G$. It is known in such case (see the references in Burdzy et al. \cite{BCP}) that the Dirichlet form is closable in $\mathbf{L}^2(\overline G;\rho)$ and that the resulting closed extension is regular in a sense made precise in the previous subsection. Moreover, the domain of the Dirichlet form $\mathcal{E}^\sigma$ is given by \eqref{domaindirichlet}, with $H$ replaced by $G$ and $m$ replaced by $\rho$.
\bigskip

\noindent\textbf{Step 3: Forward-backward martingale decomposition.} Now to analyze additive functionals, note that the duality \eqref{ptduality} between $(P_t)_{t\geq0}$ and $(\hat{P}_t)_{t\geq0}$ can be expressed in the following way. Suppose $\{ Z(u), \; 0\le u \le t \}$ is an SRBM $(N, Q, \Sigma^{-1}\gamma)$ such that $Z(0) \sim \rho$ (which is assumed to be a Probability measure). Define the time-reversed process $\hat Z(s) = Z(t-s), \; 0\le s \le t$. Then $\{\hat Z(u), \; 0\le u \le t \}$ is an SRBM $(N, -Q, \hat \gamma)$ with $\hat Z(0) \sim \rho$. This follows since the time-reversed process is obviously Markov, and the duality equation \eqref{ptduality} determines its transition kernel. 

Now let $v:G \rightarrow \rr$ be a continuous function such that
\eq\label{condonv}
\norm{v}_\infty < \infty,\quad \text{and}\quad \int_G v(x) \rho(x) dx=0.
\en
Suppose $U:\overline{G} \rightarrow \rr$ is a twice continuously differentiable function (i.e., $U \in C^2(\overline{G})$) such that
\eq\label{uandv} 
-\left( \gendual + \gen \right)U(x)= 2v(x),\; x \in G, \quad \text{and}\quad  \left(\nabla U(x)\right)'n(x) =0, \; x\in \partial G. 
\en
Here $n(x)$ is the inward normal at the boundary point $x$. This vector is also one of the columns of the matrix $N$. 

In other words, $U$ is the solution of the Neumann problem:
\eq\label{neumannu}
- \iprod{\tilde\gamma, \nabla U(x)} - \Delta U(x) = 2v(x), \; x \in G, \quad \text{and}\quad \left(\nabla U(x)\right)'n(x) =0, \; x\in \partial G. 
\en
The solution to this Poisson equation exists due to Theorem 4.16 in \cite{BL}.

\bigskip
Now, fix a $t >0$. We apply a forward-backward martingale decomposition. By It\^o's rule applied to the semimartingale $Z$, we get
\eq\label{ito1} 
M(t) := U\left(Z(t)\right) - U\left( Z(0) \right) - \int_0^t\gen U(Z(s)) ds -  \int_0^t \left(\nabla U(Z(s))\right)' \left( N + Q \right) d L(s), 
\en
is the final element of the martingale $\nabla U(Z) \cdot W$ in time $[0,t]$. 

When we reverse time, applying It\^o's rule to $\hat Z$ we get
\eq\label{ito2}
\hat M(t) := - U\left(Z(t)\right) + U\left( Z(0) \right) - \int_0^t\gendual U(Z(s)) ds -  \int_0^t \left(\nabla U(Z(s))\right)' \left( N -Q \right) d L(s)
\en
is the final element of another martingale.

Adding \eqref{ito1} and \eqref{ito2} and using \eqref{uandv} we get
\eq\label{fbmgle}
\frac{M(t) + \hat M(t)}{2} =  \int_0^t v\left( Z(s) \right) ds.
\en
Thus, for any convex function $\phi$ we have
\eq\label{convexbnd}
E \phi\left( \int_0^t v\left( Z(s) \right) ds  \right) \le \frac{1}{2}\left[  E \phi\left( M(t) \right) + E \phi\left( \hat M(t) \right)  \right].
\en
\bigskip

\noindent\textbf{Connection to the Poisson equation.}

\begin{lemma}\label{neumannlemma}
Consider the Poisson equation with Neumann boundary condition in \eqref{neumannu}. 
\begin{enumerate}
\item[(i)] If $\norm{\nabla U}_p:= \int_G \norm{\nabla U(x)}^p \rho(x) dx < \infty$ for some $p \ge 1$, then 
\[
E \left(\frac{1}{t} \int_0^t v\left( Z(s) \right) ds  \right)^p \le  C_p t^{-p/2} \norm{\nabla U}_p,
\]
where $C_p$ is some universal constant. 
\medskip

\item[(ii)] If $\norm{\nabla U}_\infty:=\sup_{x\in G} \norm{\nabla U(x)} < \infty$, then, for any $\lambda \in \rr$, we get 
\[
E \exp\left(  \frac{\lambda}{t} \int_0^t v(Z(s)) ds \right) \le \exp\left(  \frac{\lambda^2 \norm{\nabla U}^2_\infty}{2t}  \right).
\]
Hence, for any $r > 0$, we have
\[
P\left(  \frac{1}{t}\int_0^t v(Z(s)) ds \ge r  \right) \le \exp\left(  -\frac{r^2 t}{2\norm{\nabla U}^2_\infty} \right). 
\]
\end{enumerate}
\end{lemma}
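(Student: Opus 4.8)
The plan is to obtain both parts of the lemma as short consequences of the forward-backward martingale identity \eqref{fbmgle} and the convexity bound \eqref{convexbnd}, which have already been established; once those are in hand, the task reduces to estimating moments (for (i)) or exponential moments (for (ii)) of the terminal values $M(t)$ and $\hat M(t)$ of the two martingales in \eqref{ito1}--\eqref{ito2}. I take $Z(0)\sim\rho$, as in the derivation of \eqref{fbmgle}, so that $Z(s)\sim\rho$ for every $s\in[0,t]$ by stationarity. The first thing to record is the angle brackets of the two martingales. Recalling from \eqref{ito1} that $M(t)=\int_0^t\bigl(\nabla U(Z(s))\bigr)'\,dW(s)$, we have $\langle M\rangle_t=\int_0^t\norm{\nabla U(Z(s))}^2\,ds$; running the same It\^o computation for the time-reversed process $\hat Z(s)=Z(t-s)$, which by Step~3 is an SRBM with parameters $(N,-Q,\hat\gamma)$ started from $\rho$, yields $\hat M(t)=\int_0^t\bigl(\nabla U(\hat Z(s))\bigr)'\,d\hat W(s)$ (with $\hat W$ the Brownian motion in the semimartingale decomposition of $\hat Z$) and hence $\langle\hat M\rangle_t=\int_0^t\norm{\nabla U(\hat Z(s))}^2\,ds=\langle M\rangle_t$ after the substitution $s\mapsto t-s$. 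Since $\nabla U$ is continuous on $\overline G$ and the paths of $Z$ are continuous, $\langle M\rangle_t<\infty$ almost surely, so $M$ and $\hat M$ are local martingales to which the Burkholder--Davis--Gundy (BDG) inequalities apply; under the hypothesis of part (ii) the bracket is moreover bounded by $t\norm{\nabla U}_\infty^2$ deterministically.

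For part (i), I would take $\phi(y)=\abs{y}^p$ in \eqref{convexbnd} and apply BDG to get $E\abs{M(t)}^p\le C_p\,E\langle M\rangle_t^{p/2}$, and likewise for $\hat M(t)$. When $p\ge2$ the function $x\mapsto x^{p/2}$ is convex, so Jensen's inequality for the probability measure $t^{-1}\,ds$ on $[0,t]$ gives $\bigl(t^{-1}\int_0^t\norm{\nabla U(Z(s))}^2\,ds\bigr)^{p/2}\le t^{-1}\int_0^t\norm{\nabla U(Z(s))}^p\,ds$, whence, using $Z(s)\sim\rho$,
\[
E\langle M\rangle_t^{p/2}\;\le\;t^{p/2-1}\int_0^t E\norm{\nabla U(Z(s))}^p\,ds\;=\;t^{p/2}\,\norm{\nabla U}_p .
\]
Substituting into \eqref{convexbnd} and dividing by $t^p$ yields $E\bigl(t^{-1}\int_0^t v(Z(s))\,ds\bigr)^p\le C_p\,t^{-p/2}\norm{\nabla U}_p$, as claimed. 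The remaining range $1\le p<2$ is then reduced to the case $p=2$ by Jensen's inequality on $(\Omega,P)$.

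For part (ii), I would use \eqref{convexbnd} with $\phi(y)=\exp(\lambda y/t)$. Since $\langle M\rangle_t\le t\norm{\nabla U}_\infty^2$ is deterministically bounded, Novikov's condition holds trivially and $\exp\bigl(\theta M(s)-\tfrac{\theta^2}{2}\langle M\rangle_s\bigr)$ is a true martingale, so $E\exp(\theta M(t))\le\exp\bigl(\tfrac{\theta^2}{2}\,t\,\norm{\nabla U}_\infty^2\bigr)$ for every $\theta\in\rr$; with $\theta=\lambda/t$ this reads $E\exp\bigl(\tfrac{\lambda}{t} M(t)\bigr)\le\exp\bigl(\tfrac{\lambda^2\norm{\nabla U}_\infty^2}{2t}\bigr)$, and the identical bound holds for $\hat M(t)$. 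Feeding these into \eqref{convexbnd} gives $E\exp\bigl(\tfrac{\lambda}{t}\int_0^t v(Z(s))\,ds\bigr)\le\exp\bigl(\tfrac{\lambda^2\norm{\nabla U}_\infty^2}{2t}\bigr)$ for all $\lambda\in\rr$, and then Markov's inequality with the optimal choice $\lambda=rt/\norm{\nabla U}_\infty^2$ produces $P\bigl(t^{-1}\int_0^t v(Z(s))\,ds\ge r\bigr)\le\exp\bigl(-\tfrac{r^2t}{2\norm{\nabla U}_\infty^2}\bigr)$.

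The step I expect to require the most care is the identification of $\hat M$ and its angle bracket: one must verify that the time-reversed process really is the dual SRBM $(N,-Q,\hat\gamma)$ and that It\^o's formula applied in the reversed filtration --- together with the Neumann condition $(\nabla U)'n=0$ on $\partial G$, which is precisely what forces the local-time terms in \eqref{ito1} and \eqref{ito2} to cancel upon addition --- yields a martingale whose bracket is $\int_0^t\norm{\nabla U(\hat Z(s))}^2\,ds$. This is where the duality \eqref{ptduality} and the results of \cite{W} do the real work, and it is the content underlying \eqref{fbmgle}; once \eqref{fbmgle} and \eqref{convexbnd} are granted, the rest is the routine martingale bookkeeping above. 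A secondary, minor subtlety is that the Jensen step in part (i) is oriented the convenient way only for $p\ge2$, which is why the range $1\le p<2$ must be reduced to $p=2$ separately.
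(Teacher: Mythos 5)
Your argument is correct and follows essentially the same route as the paper's proof: compute the brackets of $M$ and $\hat M$, apply the Burkholder--Davis--Gundy inequality together with Jensen's inequality and stationarity for part (i), and use the deterministic bound $\langle M\rangle_t\le t\norm{\nabla U}^2_\infty$ with the exponential martingale inequality and Markov's inequality for part (ii). Your explicit flagging of the range $1\le p<2$ is a refinement the paper glosses over (its Jensen step is stated only for exponents $k\ge1$), though note that your reduction to $p=2$ yields a bound in terms of $\norm{\nabla U}_2$ rather than $\norm{\nabla U}_p$.
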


\begin{proof} We start by estimating $\iprod{M}(t)$ and $\iprod{\hat M}(t)$. We will only consider $\iprod{M}$ since the other case is symmetric. By It\^o's rule, we get
\[
\iprod{M}(t) = \int_0^t \norm{\nabla U(Z(s))}^2 ds. 
\]
Moreover, for any $k\ge 1$, Jensen's inequality implies
\[
\begin{split}
E \iprod{M}^k(t) &=t^k E \left(  \frac{1}{t} \int_0^t \norm{\nabla U(Z(s))}^2 ds  \right)^k \le t^k  E \left(  \frac{1}{t} \int_0^t \norm{\nabla U(Z(s))}^{2k} ds  \right)\\
&= t^k \frac{1}{t} \int_0^t E \norm{\nabla U(Z(s))}^{2k}ds= t^k \int_G \norm{\nabla U(x)}^{2k} \rho(x)dx= t^k \norm{\nabla U}_{2k}.
\end{split}
\]
The second last equality is due to the fact that the process is running in equilibrium. 

Suppose now $\norm{\nabla U}_{p} < \infty$ 
for some $p \ge 1$. 
We invoke Burkholder-Davis-Gundy inequality (see \cite[p.~166]{KS}) for continuous local martingales to infer
\[
E \abs{M(t)}^{p} \le C_p E \iprod{M(t)}^{p/2} \le C_p t^{p/2} \norm{\nabla U}_p.
\]
The same bound for $E \abs{\hat M(t)}^p$ proves (i) by \eqref{convexbnd}.  

Now assume that $\norm{\nabla U}_\infty < \infty$. Then, by It\^o's rule, it follows that almost surely $\iprod{M(t)} \le \norm{\nabla U}^2_\infty t$. Hence, by an exponential martingale inequality we get that for any $\lambda \in \rr$, we have
\[
E \exp\left(  \lambda M(t) \right) \le \exp\left( \frac{\lambda^2 \norm{\nabla U}^2_\infty t}{2} \right).
\]
Again obtaining the same bound for $\hat{M}$ and combining with \eqref{fbmgle}, 
we obtain 
the first part of (ii). The rest follows by a standard application of Markov's inequality and optimizing over $\lambda$.  
\end{proof}

\noindent\textbf{Poincar\'e inequality for the symmetric process.} For a large class of functions $v$, one can show that the corresponding $\norm{\nabla U}$ is in $\mathbf{L}^2(G;\rho)$. To this end, we prove first the Poincar\'e inequality in the setting of Theorem \ref{skewadd}.

\begin{lemma}\label{poincaresigma}
Let the conditions \eqref{deltaassump} and \eqref{sigmaassump} hold true. Then for any $f$ in the domain of $\mathcal{E}^\sigma$ in \eqref{symmform} which satisfies $\int_G f(x) \rho(x)\; dx=0$, we have
\[
\int_G f^2(x) \exp(\langle\tilde{\gamma},x\rangle)\; dx \le C_P \int_G \norm{\nabla f}^2(x) \exp(\langle\tilde{\gamma},x\rangle)\; dx.  
\] 
Hereby, $C_P$ is given by $\frac{4\tilde{\lambda}_n}{\min_{1\leq k\leq n-1}\tilde{\alpha}_k^2}$ with $\tilde{\lambda}_n$ being the largest eigenvalue of $\Xi^{-1}$ and $\tilde{\alpha}_k$, $k=1,\dots,n-1$ being defined as in the introduction.
\end{lemma}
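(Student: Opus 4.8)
The plan is to pull the Poincar\'e inequality back along the linear bijection $\Sigma$: on the orthant, equipped with the product-exponential measure $\nu$, the inequality is elementary, and transferring it to the cone $G$ equipped with $\rho$ only requires tracking how a linear change of variables distorts the Dirichlet form.

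First recall, from Step 2, that $\mathcal{E}^\sigma$ is (the closure of) the Dirichlet form of a normally reflected Brownian motion on $\overline G$ with identity covariance and invariant density $\exp(\langle\tilde{\gamma},x\rangle)$, and that its domain is $\{f\in\mathbf{L}^2(G;\rho):\nabla f\in\mathbf{L}^2(G;\rho)\}$. Since $Z=\Sigma^{-1}Y$ and, under \eqref{deltaassump} and \eqref{sigmaassump}, the spacings process $Y$ has as its \emph{unique} stationary distribution the measure $\nu$ --- the product of Exponential laws with parameters $\tilde{\alpha}_k$, $k=1,\dots,n-1$ (cf.\ the introduction and \cite{IPBKF}) --- uniqueness of invariant distributions forces the normalized version of $\rho$ to equal the push-forward $\Sigma^{-1}_{\ast}\nu$; equivalently, the map $x\mapsto\Sigma x$ pushes $\rho$ forward to $\nu$. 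Hence for every $g$ on $(\rr_+)^{n-1}$ with $g\circ\Sigma\in\mathbf{L}^2(G;\rho)$ we have $\int g\,d\nu=\int_G(g\circ\Sigma)\,d\rho$, and the same with $g$ replaced by $g^2$, up to the common normalizing constant --- which is immaterial, since it appears on both sides of the inequality to be proved.

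Next, exactly as in the derivation of Lemma \ref{exppoincare}, scaling the Poincar\'e constant $4$ of the standard Exponential law shows that the Exponential law of parameter $\tilde{\alpha}_k$ has Poincar\'e constant $4\tilde{\alpha}_k^{-2}$, and tensorization yields that $\nu$ satisfies the Poincar\'e inequality with constant $4\big(\min_{1\le k\le n-1}\tilde{\alpha}_k^{2}\big)^{-1}$. To transfer this to $G$ it suffices to prove the asserted inequality for $f\in C_c^\infty(G)$, the general case following by density of $C_c^\infty(G)$ in the domain of $\mathcal{E}^\sigma$ (in the natural Hilbert norm associated with $\mathcal{E}^\sigma$, by the regularity recorded in Step 2) and continuity of both sides and of $f\mapsto\int_G f\,d\rho$ in that norm. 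For such an $f$ with $\int_G f\,\rho\,dx=0$, put $g:=f\circ\Sigma^{-1}\in C_c^\infty\big((\rr_+)^{n-1}\big)$, so that $\int g\,d\nu=0$; the chain rule gives $\nabla g(y)=(\Sigma')^{-1}(\nabla f)(\Sigma^{-1}y)$, whence $\|\nabla g(y)\|^2\le\|(\Sigma')^{-1}\|_{\mathrm{op}}^2\,\|(\nabla f)(\Sigma^{-1}y)\|^2$ with $\|(\Sigma')^{-1}\|_{\mathrm{op}}^2=\lambda_{\max}\big((\Sigma\Sigma')^{-1}\big)=\lambda_{\max}(\Xi^{-1})=\tilde{\lambda}_n$; in particular this constant is independent of the choice of square root $\Sigma$ of $\Xi$. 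Combining the change of variables, the Poincar\'e inequality for $\nu$, the pointwise gradient bound, and the change of variables once more yields
\[
\int_G f^2\,\rho\,dx=\int g^2\,d\nu\le\frac{4}{\min_k\tilde{\alpha}_k^{2}}\int\|\nabla g\|^2\,d\nu\le\frac{4\tilde{\lambda}_n}{\min_k\tilde{\alpha}_k^{2}}\int\|(\nabla f)(\Sigma^{-1}y)\|^2\,\nu(dy)=\frac{4\tilde{\lambda}_n}{\min_k\tilde{\alpha}_k^{2}}\int_G\|\nabla f\|^2\,\rho\,dx,
\]
which is the claim with $C_P=4\tilde{\lambda}_n\big(\min_{1\le k\le n-1}\tilde{\alpha}_k^{2}\big)^{-1}$.

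No step here is genuinely hard; the points requiring care are (a) identifying the distortion factor as $\lambda_{\max}(\Xi^{-1})$, the top eigenvalue of the \emph{inverse} covariance, and checking that it does not depend on the arbitrary square root $\Sigma$; and (b) the functional-analytic bookkeeping, i.e.\ extending the inequality from $C_c^\infty(G)$ to every $f$ in the domain of the closed form $\mathcal{E}^\sigma$, which is legitimate by the regularity statement from Step 2, together with the observation that the normalizing constant of $\rho$ cancels.
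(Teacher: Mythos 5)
Your proposal is correct and follows essentially the same route as the paper: change variables via $g=f\circ\Sigma^{-1}$ to move to the orthant, use the product-exponential Poincar\'e inequality for $\nu$ with constant $4/\min_k\tilde{\alpha}_k^2$, and bound the gradient distortion by $\tilde{\lambda}_n=\lambda_{\max}(\Xi^{-1})$. The extra care you take (identifying the normalized $\rho$ as the push-forward of $\nu$ via uniqueness of the invariant law, the density argument extending from $C_c^\infty(G)$ to the full domain, and the independence of $\tilde\lambda_n$ from the choice of square root) only makes explicit what the paper leaves implicit.
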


\noindent\textit{Proof}.
 As remarked in the introduction, condition \eqref{deltaassump} implies that the process of spacings $Y$ possesses a unique invariant distribution, so that the density function $\rho$ can be normalized to a probability density function, which we will refer to as $\rho_1$.

We now fix a function $f$ as in the statement of the theorem, define a function $g:(\rr_+)^{n-1}\rightarrow\rr$ by $g(y)=f(\Sigma^{-1}y)$ and denote the invariant distribution of the process of spacings $Y$ by $\nu$ as before. From the definitions of $\nu$ and $\rho_1$ we see 
\eq\label{transf}
\int_G f^2(x) \rho_1(x)\;dx=\int_{(\rr_+)^{n-1}} g^2(y)\;d\nu(y).
\en
By Theorem 2 in \cite{IPBKF} the probability measure $\nu$ is a product of Exponential distributions with parameters $\tilde{\alpha}_k$, $k=1,\dots,n-1$, so that as in the derivation of Lemma \ref{exppoincare} we conclude that $\nu$ satisfies the Poincar\'e inequality with the Poincar\'e constant $\frac{4}{\min_{1\leq k\leq n-1}\tilde{\alpha}_k^2}$. In particular, it holds
\eq\label{poincare_nu}
\int_{(\rr_+)^{n-1}} g^2(y)\; d\nu(y)\leq \frac{4}{\min_{1\leq k\leq n-1}\tilde{\alpha}_k^2}\int_{(\rr_+)^{n-1}} \norm{\nabla g}^2(y)\; d\nu(y).
\en

In addition, employing the chain rule as in the previous subsection 
(see \eqref{DFcompare}), 
we can bound $\norm{\nabla g}^2(y)$ from above by $\tilde{\lambda}_n\norm{\nabla f}^2(\Sigma^{-1}y)$ for all $y\in(\rr_+)^{n-1}$, where $\tilde{\lambda}_n$ is the largest eigenvalue of $(\Sigma\Sigma')^{-1}=\Xi^{-1}$. Putting together \eqref{transf}, \eqref{poincare_nu} and the latter observation, 
we obtain 
Lemma 7. \ep 
\bigskip

\noindent\textbf{A variance bound for additive functionals.} It is well-known (see Proposition 2.1 in the notes by Bakry \cite{Bk}) that a Poincar\'e inequality as proved in Lemma \ref{poincaresigma} implies that for any function $f\in \mathbf{L}^2(G;\rho)$ such that $\int_G f(x) \rho(x) dx=0$ one has
\eq\label{semigpbnd}
\int_G \left( P_t f(x) \right)^2 \rho(x) dx \le e^{-2t/C_P} \int_G f^2(x) \rho(x)dx.
\en
Here $(P_t)_{t\geq0}$ is the Markov semigroup associated with the Dirichlet form $\mathcal{E}^\sigma$ 
in \eqref{symmform}. 
We show below that the Poincar\'e inequality entails a variance bound for a large class of additive functionals via Theorem \ref{skewadd}.

\begin{lemma}\label{poincarevariance}
For any $v\in \mathbf{L}^2(G;\rho)$ such that $\int_G v(x) \rho(x)dx=0$, suppose that the function
\eq \label{def of U}
U(x)=\int_0^\infty P_tv(x) dt, \quad x\in G,
\en
is in $C^2(\overline{G})$ and satisfies the Neumann boundary condition. Then $U$ is a solution of the Poisson equation \eqref{neumannu}. Moreover, if $\norm{\nabla{v}}\in \mathbf{L}^2(G;\rho)$, then $\norm{\nabla U}\in \mathbf{L}^2(G;\rho)$. 
\end{lemma}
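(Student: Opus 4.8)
The plan is to establish the two assertions in turn, using only the Poincar\'e inequality of Lemma~\ref{poincaresigma} and standard self-adjoint semigroup theory. I would first fix notation: by \eqref{whatisgen} and \eqref{whatisesigma} the symmetrized form $\mcal{E}^\sigma$ has generator $L^\sigma:=\tfrac12(\gen+\gendual)=\tfrac12\big(\iprod{\tilde\gamma,\nabla}+\Delta\big)$ with the Neumann boundary condition, $(P_t)_{t\ge0}$ is the self-adjoint Markovian semigroup on $\mathbf{L}^2(G;\rho)$ that $L^\sigma$ generates, and the Neumann--Poisson problem \eqref{neumannu} says precisely $-L^\sigma U=v$ on $G$ together with $(\nabla U)'n=0$ on $\partial G$. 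Since $\int_G v\,\rho\,dx=0$, the bound \eqref{semigpbnd} gives $\norm{P_tv}_{\mathbf{L}^2(G;\rho)}\le e^{-t/C_P}\norm{v}_{\mathbf{L}^2(G;\rho)}$, so the $\mathbf{L}^2(G;\rho)$-valued Bochner integral $U=\int_0^\infty P_tv\,dt$ converges absolutely (and agrees with the pointwise definition), with $\norm{U}_{\mathbf{L}^2(G;\rho)}\le C_P\norm{v}_{\mathbf{L}^2(G;\rho)}$.

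For the first assertion I would prove that $U$ is a weak solution of $-L^\sigma U=v$ and then invoke the assumed $C^2(\overline G)$-regularity. Fix $\phi\in C_c^\infty(G)$. Because $\phi$ is supported in the open cone, integration by parts gives, for every $t\ge0$,
\[
\mcal{E}^\sigma(P_tv,\phi)=-\int_G P_tv\,(L^\sigma\phi)\,\rho\,dx=-\tfrac{d}{dt}\iprod{P_tv,\phi}_{\mathbf{L}^2(G;\rho)},
\]
the last equality using $\tfrac{d}{dt}P_tv=L^\sigma P_tv$ (equivalently, writing $\iprod{P_tv,\phi}=\iprod{v,P_t\phi}$ and differentiating the smooth curve $t\mapsto P_t\phi$). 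Integrating over $t\in[0,\infty)$, applying Fubini on the left-hand side and $\norm{P_tv}_{\mathbf{L}^2(G;\rho)}\to0$ on the right, I obtain
\[
\int_G U\,(L^\sigma\phi)\,\rho\,dx=\int_0^\infty\tfrac{d}{dt}\iprod{P_tv,\phi}_{\mathbf{L}^2(G;\rho)}\,dt=-\iprod{v,\phi}_{\mathbf{L}^2(G;\rho)}\qquad\text{for all }\phi\in C_c^\infty(G),
\]
that is, $-L^\sigma U=v$ in $\mcal{D}'(G)$. Since $U\in C^2(\overline G)$, a further integration by parts turns this into the classical identity $-\iprod{\tilde\gamma,\nabla U}-\Delta U=2v$ on $G$, which together with the assumed Neumann condition is exactly \eqref{neumannu}.

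For the second assertion, assume in addition $\norm{\nabla v}\in\mathbf{L}^2(G;\rho)$, so $v$ lies in the domain of $\mcal{E}^\sigma$, which by the discussion following \eqref{symmform} is $\{u\in\mathbf{L}^2(G;\rho):\nabla u\in\mathbf{L}^2(G;\rho)\}$ (cf.\ \eqref{domaindirichlet}). Since $\int_G P_tv\,\rho\,dx=0$ for every $t$, the Poincar\'e inequality of Lemma~\ref{poincaresigma} amounts to a spectral gap for $-L^\sigma$ on the orthogonal complement of the constants; by the spectral theorem (or, concretely, by applying \eqref{semigpbnd} to the mean-zero function $(-L^\sigma)^{1/2}v$, for which $\norm{(-L^\sigma)^{1/2}v}_{\mathbf{L}^2(G;\rho)}^2=\mcal{E}^\sigma(v,v)$) one then gets
\[
\mcal{E}^\sigma(P_tv,P_tv)\le e^{-2t/C_P}\,\mcal{E}^\sigma(v,v),\qquad\text{i.e.}\qquad \norm{\nabla P_tv}_{\mathbf{L}^2(G;\rho)}\le e^{-t/C_P}\,\norm{\nabla v}_{\mathbf{L}^2(G;\rho)}.
\]
Hence $\int_0^\infty P_tv\,dt$ converges not just in $\mathbf{L}^2(G;\rho)$ but in the stronger norm $\big(\mcal{E}^\sigma(\cdot,\cdot)+\norm{\cdot}_{\mathbf{L}^2(G;\rho)}^2\big)^{1/2}$ on the domain of $\mcal{E}^\sigma$; its limit is the function $U$ from the first part, and since $u\mapsto\nabla u$ is continuous for this norm it commutes with the integral. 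Minkowski's integral inequality now gives
\[
\norm{\nabla U}_{\mathbf{L}^2(G;\rho)}\le\int_0^\infty\norm{\nabla P_tv}_{\mathbf{L}^2(G;\rho)}\,dt\le C_P\,\norm{\nabla v}_{\mathbf{L}^2(G;\rho)}<\infty.
\]

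The routine ingredients --- the two integration-by-parts identities, the use of Fubini and of differentiation under the integral sign, and Minkowski's inequality --- are straightforward. The one point that needs genuine care is justifying the interchange of the gradient (equivalently, of the form $\mcal{E}^\sigma$) with the time integral defining $U$: I would handle this, as above, by upgrading the convergence of $\int_0^\infty P_tv\,dt$ from $\mathbf{L}^2(G;\rho)$ to the form-domain norm, and this upgrade is precisely where the exponential decay of $\norm{\nabla P_tv}_{\mathbf{L}^2(G;\rho)}$ --- hence the Poincar\'e inequality of Lemma~\ref{poincaresigma} --- is indispensable.
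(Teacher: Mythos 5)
Your proof is correct, and the first half (showing $-\tfrac12(\gen+\gendual)U=v$ by testing against $\phi\in C_c^\infty(G)$ and integrating $\tfrac{d}{dt}\iprod{P_tv,\phi}$ over $t$) is essentially the paper's own ``semigroup calculus'' argument, just written in weak form and then upgraded via the assumed $C^2(\overline G)$ regularity. The second half, however, takes a genuinely different route. The paper proves the pointwise gradient commutation estimate $\norm{\nabla P_tv}(x)\le \big(P_t\norm{\nabla v}\big)(x)$ by a synchronous coupling of two reflected processes and the convexity of $G$ (Lemma 3.1 of Wang--Yan), and then concludes with the Cauchy--Schwarz trick of \eqref{CauchySchwarz}. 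You instead exploit self-adjointness of the semigroup: the spectral gap encoded in Lemma \ref{poincaresigma} gives $\mathcal{E}^\sigma(P_tv,P_tv)\le e^{-2t/C_P}\mathcal{E}^\sigma(v,v)$, hence $\norm{\nabla P_tv}_{\mathbf{L}^2(G;\rho)}\le e^{-t/C_P}\norm{\nabla v}_{\mathbf{L}^2(G;\rho)}$, and Minkowski's integral inequality finishes the proof with the explicit constant $C_P\norm{\nabla v}_{\mathbf{L}^2(G;\rho)}$. Each approach buys something: the paper's pointwise bound $\norm{\nabla P_tv}\le P_t\norm{\nabla v}$ is stronger information and is what powers the $\mathbf{L}^p$ and $\mathbf{L}^\infty$ statements of Lemma \ref{neumannlemma}; but for the pure $\mathbf{L}^2$ claim your spectral argument is cleaner and more robust --- it needs neither convexity of $G$ nor the coupling, and it sidesteps a delicate point in the paper's final step, namely that $P_t\norm{\nabla v}$ converges to the positive constant $\int_G\norm{\nabla v}\rho\,dx$ rather than to zero, so the decay needed to integrate over $t\in[0,\infty)$ must come from somewhere other than the pointwise bound itself. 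Your handling of the interchange of $\nabla$ with the time integral, by upgrading convergence of the Bochner integral to the form-domain norm, is exactly the right way to make that step rigorous.
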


\begin{proof}
We first claim that $U$ is finite everywhere on $G$. To see this choose $0 < \delta < 2/C_P$ and note that by the Cauchy-Schwarz inequality
\eq \label{CauchySchwarz}
\left(  \int_0^\infty P_t v(x) dt\right)^2  \le\left(  \int_0^\infty e^{-\delta t} dt \right) \left( \int_0^\infty  e^{\delta t} \left(P_t v(x)\right)^2 dt \right).
\en
The quantity on the left is infinite only if the quantity on the right is. But the right-hand side is integrable with respect to $\rho(x)dx$ by \eqref{semigpbnd} and our choice of $\delta$.

Let $\mathcal{A}$ denote the generator of the semigroup $(P_t)_{t\geq0}$. Then, by the usual semigroup calculus it follows that
\[
\mathcal{A} U(x) = \int_0^\infty \mathcal{A} P_t v(x) dt = \int_0^\infty \left( \frac{d}{dt} P_tv(x) \right) dt 
= - v(x).
\]
The generator $\mathcal{A}$ is an extension of the differential operator 
$(\gen + \gendual)/2$ 
over $C^2$-functions satisfying the Neumann boundary condition. It follows that $U$ is a solution of the Poisson problem with the Neumann boundary condition.  

For any two points $x,y \in G$, let $X^x, X^y$ be two processes with transition kernels $(P_t)_{t\geq0}$ driven by the same Brownian motion $B$ while starting from $x$ and $y$, respectively. 
By an application of It\^o's rule 
\[
\abs{X_t^x - X_t^y} = 2 \int^{t}_{0} \langle X_{s}^{x} - X_{s}^{y}, n(X_{s}^{x}) \rangle d \ell^{x}_{s} + 2 \int^{t}_{0} \langle X_{s}^{y} - X_{s}^{x}, n(X_{s}^{y}) \rangle d \ell^{y}_{s} + \abs{x-y} , 
\]
where $\ell^{x}\, $ and $\ell^{y}$ are the local time components of $X^{x}$ and $X^{y}$ at $\partial G$, respectively. Since $G$ is convex, the inner products in the integrands are nonpositive, and hence 
$\abs{X_t^x - X_t^y} \le \abs{x-y}$ with probability one (Lemma 3.1 of \cite{WY}). 
Taking $y \rightarrow x$, this shows $\norm{\nabla P_t v} \le P_t \norm{\nabla v}$ for all $t\ge 0$ (see inequality (3.4) in \cite{WY}). The rest is an application of the Cauchy-Schwarz inequality as 
in \eqref{CauchySchwarz}. 
\end{proof}

We can now put together our findings to give a proof of Theorem \ref{skewadd}.

\begin{proof}[Proof of Theorem 2] 
Part (i) is a consequence of a combination of Lemmata \ref{neumannlemma} and \ref{poincarevariance} along with the transformation back from $Z$ to $Y$. Part (ii) is obtained by putting together H\"older's inequality
\[
\begin{split}
\ev^{\kappa} \Big[ \Big ( \frac{1}{t} \int^{t}_{0} u(Y(s)) d s \Big)^{2-\varepsilon}\Big]
&=\ev^{\nu} \Big[\frac{d\kappa}{d\nu}\cdot\Big ( \frac{1}{t} \int^{t}_{0} u(Y(s)) d s \Big)^{2-\varepsilon}\Big]\\
&\leq\ev\Big[\Big(\frac{d\kappa}{d\nu}\Big)^{2/\epsilon}\Big]\cdot \ev^{\nu} \Big[ \Big ( \frac{1}{t} \int^{t}_{0} u(Y(s)) d s \Big)^{2}\Big]
\end{split}
\]
and part (i).
\end{proof}
\bigskip

\begin{rmk} It must be known that $U$ as defined in Lemma \ref{poincarevariance} is automatically in $C^2(\overline{G})$ and satisfies the Neumann boundary condition. However, at the present moment we cannot find a suitable reference.
\end{rmk}

\section{Convergence to equilibrium for a general SRBM}\label{general}
Throughout this section we consider a general semimartingale reflecting Brownian motion $Y$ in the $(n-1)$-dimensional orthant $\mathfrak S:=(\rr_+)^{n-1}$ in the sense of \cite{DW}. The set of parameters for such a process is given by a drift vector $\gamma\in\rr^{n-1}$, a covariance matrix $\Xi=(\xi_{ij})\in\rr^{(n-1)\times(n-1)}$ and a reflection vector field $\mathbf{r}$ on the boundary $\partial \mathfrak S$ of the orthant $\mathfrak S$ which is constant along each face. Intuitively, the process $Y$ behaves as a Brownian motion with drift vector $\gamma$ and covariance matrix $\Xi$ in the interior of $\mathfrak S$, and is reflected in the direction determined by the reflection vector field $\mathbf{r}$, whenever it hits the boundary $\partial \mathfrak S$. For a precise definition of the process $Y$ we refer to section 2 of \cite{DW}. In particular, the process of spacings  
\[
(X_{(2)}-X_{(1)},\dots,X_{(n)}-X_{(n-1)})
\]
between the particles in the particle system \eqref{ordered_part} is an SRBM in $\mathfrak S$ in the sense of \cite{DW} (see section 5 in \cite{IPBKF} for more details). 

We now give the proof of Theorem \ref{mainlyapunov}.

\subsection{Proof of Theorem \ref{mainlyapunov}.} \textbf{Step 1.} We start by introducing the differential operator
\eq
\gen:=\frac{1}{2}\sum_{i,j=1}^{n-1} \xi_{ij}\frac{\partial^2}{\partial y_i\partial y_j}+\langle \gamma,\nabla \rangle.
\en
We will construct a function $V:\;\mathfrak S\backslash\{0\}\rightarrow[1,\infty)$ which satisfies
\eq\label{lyapunov}
\gen V\leq -c_1 V +c_2 1_{B_d}
\en
for some positive constants $c_1$, $c_2$, $d$, whereby $B_d$ stands for the intersection of the ball of radius $d$ around the origin with the orthant $\mathfrak S$. This will be done in step 2. In step 3 we will combine ideas from section 5 in \cite{DMT} with \eqref{lyapunov} to finish the proof of the theorem.

\bigskip

\noindent\textbf{Step 2.} In order to provide a function satisfying \eqref{lyapunov} we start with the Lyapunov function $W$ constructed in \cite{DW}. The properties of $W$ which will use are the following: 
\begin{eqnarray}
&&W\in C^2(\mathfrak S\backslash\{0\}),\label{lyap1}\\
&&W(y)\rightarrow\infty,\;|y|\rightarrow\infty,\label{lyap2}\\
&&\sum_{i,j=1}^{n-1}\Big|\frac{\partial^2 W}{\partial y_i\partial y_j}(y)\Big|\rightarrow0,\;|y|\rightarrow\infty,\label{lyap3}\\
&&\exists\; c>0:\;\langle\gamma,(\nabla W)(y)\rangle\leq-c,\;y\in \mathfrak S\backslash\{0\},\;{\rm and}\label{lyap4}\\
&&\quad\quad\quad\quad\;\langle\mathbf{r}(y),(\nabla W)(y)\rangle\leq-c,\;y\in\partial \mathfrak S\backslash\{0\},\label{lyap4'}
\end{eqnarray}
Moreover $W$ is a homogeneous function of degree one. 

Since $W$ is homogeneous, all partial derivatives of the form $\frac{\partial W}{\partial y_i}$ and $\frac{\partial^2 W}{\partial y_i\partial y_j}$ are homogeneous functions of degree zero and negative one, respectively. This creates the problem that the second partial derivatives have a singularity at the origin. To get rid of this problem we will deform the function $W$ near the origin appropriately.

Let $\phi:(0, \infty) \rightarrow (0, \infty)$ be a twice continuously differentiable function such that
\begin{enumerate}
\item[(i)] the following asymptotic properties are satisfied near infinity: 
\eq\label{nearinf}
\lim_{x\rightarrow \infty} \phi'(x) =1, \qquad \lim_{x\rightarrow \infty} \phi''(x)=0,
\en 
\item[(ii)] and the following asymptotic properties are satisfied near zero:
\eq\label{nearzero}
\lim_{x\rightarrow 0+} \frac{\phi'(x)}{x} = \lim_{x \rightarrow 0+} \phi''(x)= 1.
\en
\item[(iii)] Moreover, the global condition $\phi'(x) \ge 0$, $x\in (0, \infty)$ is satisfied and the function $\phi'$ is bounded. 
\end{enumerate}

\bigskip

Now, define the function $U(y)= \phi(W(y))$ on $\mathfrak S\backslash \{0\}$ and note the following:
\[
\begin{split}
(\nabla U)(y) &= \phi'\left( W(y) \right) (\nabla W)(y),\\
\frac{\partial^2 U}{\partial y_i \partial y_j} (y) &= \phi''\left( W(y) \right) \frac{\partial W}{\partial y_i}(y) \frac{\partial W}{\partial y_j}(y) + \phi'\left( W(y) \right) \frac{\partial^2 W}{\partial y_i \partial y_j} (y). 
\end{split}
\]

Since $W$ is positively homogeneous and \eqref{nearinf} holds, there is a $d>0$ large enough such that if $y \notin B_d$, we have 
\eq\label{nearinf2}
\phi'\left( W(y) \right) > 1/2, \quad \text{and} \quad \Big|\sum_{i,j=1}^{n-1} \xi_{ij}\frac{\partial^2 U}{\partial y_i\partial y_j}(y)\Big|<c/4,
\en
where $c$ is the constant in \eqref{lyap4} and \eqref{lyap4'}. Hereby, the second inequality is a consequence of \eqref{nearinf}, the fact that all first partial derivatives of $W$ are homogeneous of degree zero and \eqref{lyap3}. 
In conjunction with \eqref{lyap4}, this also implies that 
\eq\label{nearinf3}
\iprod{\gamma, \nabla U(y)} \le -\frac{c}{2}, \quad \text{for all}\; y \notin B_d.
\en

Now, since $\nabla W$ is homogeneous of degree zero, and $\phi'\geq0$ throughout, we also obtain
\eq\label{globalsign}
\iprod{\mathbf{r}(y), \nabla U(y)} \le 0, \quad \text{for all}\; y \in \partial \mathfrak S\backslash \{0\}.
\en

Finally, each second partial derivative $\frac{\partial^2 W}{\partial y_i\partial y_j}$ is homogeneous of degree negative one. Hence, when $y$ approaches zero, we have from \eqref{nearzero} 
\eq\label{nearzero2}
\Big|\frac{\partial^2 U}{\partial y_i \partial y_j}(y)\Big| 
= O(1) + \frac{\phi'(W(y))}{W(y)}\cdot W(y)\Big|\frac{\partial^2 W}{\partial y_i \partial y_j} (y)\Big|= O(1).
\en
The last equality follows from \eqref{nearzero} and the fact that the function
\[
W(y) \Big|\frac{\partial^2 W}{\partial y_i \partial y_j} (y)\Big|
\]
is homogeneous of degree zero.

\bigskip

Now, we define $V(y)=e^{\lambda U(y)}$, $y\in \mathfrak S\backslash\{0\}$ with a positive constant $\lambda$ to be chosen later. An elementary computation shows
\[
\frac{\gen V}{V}=\lambda\langle\gamma,\nabla U\rangle
+\frac{\lambda}{2}\sum_{i,j=1}^{n-1} \xi_{ij}\frac{\partial^2 U}{\partial y_i\partial y_j} 
+\frac{\lambda^2}{2}(\nabla U)'\Xi(\nabla U)
\] 
on $\mathfrak S\backslash\{0\}$. 

From \eqref{nearinf2}, the fact that $\nabla U$ is globally bounded and \eqref{nearinf3}, it follows that choosing $\lambda>0$ small enough, we obtain the following estimates on the complement of $B_d$ in $\mathfrak S\backslash\{0\}$:
\[
\begin{split}
\frac{\gen V}{V}&=\lambda\langle\gamma,\nabla U\rangle
+\frac{\lambda}{2} \left[  \sum_{i,j=1}^{n-1} \xi_{ij}\frac{\partial^2 U}{\partial y_i\partial y_j}  
+\lambda(\nabla U)'\Xi(\nabla U)\right]\\
& \le \lambda\langle\gamma,\nabla U\rangle+ \frac{\lambda c}{4} \le - \frac{\lambda c}{4}.
\end{split}
\]
On the other hand, since $U$, its gradient, and its Hessian can bounded uniformly on $B_d$, one can choose a large enough $C>0$ such that
\[
{\gen V} \le - \frac{\lambda c}{2} V + C, \quad \text{for all}\; y \in B_d.
\]

Hence, we see that there exist positive constants $c_1$, $c_2$ such that
\eq\label{ineq_L}
\gen V\leq -c_1 V +c_2 1_{B_d}
\en
on $\mathfrak S\backslash\{0\}$.

\bigskip

\noindent\textbf{Step 3.} To be able to proceed as in section 5 of \cite{DMT}, we first show that $Y$ is irreducible with respect to the Lebesgue measure on $\mathfrak S$ and aperiodic, in the sense of the definitions in section 3 of \cite{DMT}. Indeed, it is well-known (see e.g. \cite{RW}) that for any fixed time $t>0$ the probability that $Y(t)$ will be in the interior of the orthant $\mathfrak S$ is equal to one. Thus, for any open ball $B\subset \mathfrak S$, the quantity $E\Big[\int_1^\infty 1_{\{Y(s)\in B\}}\;ds\Big]$ is positive, since a Brownian motion started in the interior of $\mathfrak S$ has a positive probability of hitting $B$ before hitting $\partial \mathfrak S$. Moreover, the same argument and the Feller property of $Y$ (see e.g. \cite{W}) imply that $Y$ is aperiodic with respect to the set $B_1$ and that $B_1$ is a small set for $Y$.   

From the latter observations we conclude that it suffices to check the condition in part (b) of Theorem 5.2 in \cite{DMT} to obtain our Theorem \ref{mainlyapunov} (note that we cannot use part (c) of the same theorem directly, since we do not know that $V$ belongs to the domain of the generator of $Y$ and that it is a Lyapunov function with respect to the latter). To this end, it suffices to show that there is a function $\xi:\;(0,1]\rightarrow[0,\infty)$ with $\xi(1)<1$, a petite set $B$ of the process $Y$ and a constant $c_3>0$ such that
\eq\label{drift_cond1}
\int_{\mathfrak S} V(z) P_t(y,dz)\leq\xi(t)V(y)+c_3 1_B(y)
\en
holds for all $t\in(0,1],\;y\in \mathfrak S\backslash\{0\}$. 

To show \eqref{drift_cond1}, we take the expectation in the change of variables formula in the form of equation (3.1) in \cite{W2}, and use the inequality \eqref{globalsign} to obtain
\eq\label{drift_cond2}
E\Big[e^t V(Y(t))\Big]\leq V(y)+E\Big[\int_0^t e^s(\gen V)(Y(s))\;ds\Big],
\en
whereby we start the process $Y$ in $y$. Note that the integrability of the terms inside the latter two expectations follows from the fact that the functions $V$ and $\gen V$ are bounded on compact sets and grow at most at an exponential rate as $|y|$ tends to infinity. 

To finish the proof, we plug in \eqref{ineq_L} into \eqref{drift_cond2} and use Fubini's Theorem to conclude
\eq\label{drift_cond3}
\int_{\mathfrak S} V(z) P_t(y,dz)\leq e^{-t}V(y)+c_2\int_0^t e^{s-t}P_s(y,B_R)\;ds.
\en
It remains to observe that our inequality \eqref{drift_cond3} is precisely the inequality (33) of \cite{DMT}, so that we can obtain the desired inequality \eqref{drift_cond1} from the inequality \eqref{drift_cond3} in exactly the same way as part (d) of Theorem 5.1 in \cite{DMT} is obtained from the inequality (33) there. \ep   

\bigskip

We now attempt to explicitly describe a Lyapunov function for certain rank-based models. Consider the process of spacings $(X_{(2)}-X_{(1)},\dots,X_{(n)}-X_{(n-1)})$ in the particle system \eqref{ordered_part}. It is known (see Corollary 2 in \cite{IPBKF}) that under the conditions
\begin{eqnarray}\label{condlyap}
&&\delta_1\geq\delta_2\geq\dots\geq\delta_n,\\
&&\delta_i>\delta_{i+1}\;{\rm for\;some}\;i\in\{1,\dots,n\}  
\end{eqnarray}
the process of spacings is a positive recurrent SRBM. This includes, in particular, the so-called \textit{Atlas model} introduced in \cite{F}, in which $\delta_1>0$, $\delta_2=\ldots=\delta_n=0$, $\sigma_1=\ldots=\sigma_n=1$. Under the above conditions we can provide an explicit function $V$, which may serve as a Lyapunov function in the proof of Theorem \ref{mainlyapunov}. Indeed, picking a vector $v\in \mathfrak S$ and setting $V_0(y)=e^{\langle v,y\rangle}$, $y\in \mathfrak S$ one has $V_0\geq1$ and
\[
(\gen V_0)(y)=\Big(\frac{1}{2}\sum_{i,j=1}^{n-1} \xi_{ij} v_i v_j + \sum_{i=1}^{n-1} \gamma_i v_i\Big)V_0(y),\;y\in \mathfrak S.
\] 
This shows that, if one can find a $v\in \mathfrak S$ with $\langle\gamma, v\rangle<0$ and $\langle \mathbf{r}(y), v\rangle\leq 0$, $y\in\partial \mathfrak S$, then for a $c>0$ small enough the function $V(y)=e^{c\langle v,y\rangle}$, $y\in \mathfrak S$ will satisfy
\eq
(\gen V)(y)\leq -c_1 V(y),\;y\in \mathfrak S,\quad \langle\mathbf{r}(y),(\nabla V)(y)\rangle\leq0,\;y\in\partial \mathfrak S
\en 
for some constant $c_1>0$ and 
may serve as a Lyapunov function in the proof of Theorem \ref{mainlyapunov}. 

In general, the existence of such a vector $v$ can be easily verified via the Farkas's Lemma (see e.g. \cite[page 200]{R}) which states the following. There is a vector $v$ with $\iprod{\gamma, v} < 0$ and $\iprod{\mathbf{r}(y), v} \le 0$, $y\in\partial \mathfrak S$ if and only if there is \textit{no solution} to $Rx + \gamma =0$ in $\mathfrak S$. Hereby, $R$ is the matrix whose columns are the vectors of reflection, which in our case is given by \eqref{whatisr}. Since $R$ is of full rank, we infer that there is a $v$ satisfying our requirements for a Lyapunov function if and only if $-R^{-1}\gamma\notin \mathfrak S$. 

Under our condition \eqref{condlyap} one verifies for $n\geq4$ that the vector $v=(v_1,\dots,v_{n-1})$ with components
\[
v_i=\Big(\frac{n}{2}-1\Big)^2-\Big(\frac{n}{2}-i\Big)^2+\epsilon,\;i=1,\dots,n-1
\]
has the desired properties for any $0<\epsilon<\Big(\frac{n}{2}-1\Big)^2-\Big(\frac{n}{2}-2\Big)^2$ by using the explicit formula for the reflection matrix $R$ given in \eqref{whatisr} and the concavity of the function $x\mapsto-\Big(\frac{n}{2}-x\Big)^2$. For many other examples of linear Lyapunov functions see \cite{C}. 


\section{Applications}
\subsection{Portfolio performance}\label{portfolio}
For the rest of the paper we set $\sigma_i=1$, $i\in I$ without further notice. 
As an application of Theorem 1 we shall consider 
an abstract equity market model 
$(S_{i}(\cdot),\; i \in I)$ where the market capitalization 
$S_{i}(t)$ of company $i \in I$ 
at time $t \ge 0$ is given by $S_{i}(t) := \exp (X_{i}(t))$. That is, 
each $X_{i}(t)$ in (\ref{ranksde}) gives the logarithmic capitalization 
of company $i \in I$ at time $t \ge 0$. This market model was introduced
by Fernholz in the book \cite{F} and investigated further in the articles \cite{BFK},
\cite{FI}, \cite{pp}, \cite{CP} and \cite{IPBKF} among others.

By It\^o's formula and (\ref{ranksde}) we have 
\[
d S_{i}(t) = S_{i}(t) \big( \sum_{j \in I} \delta_{j} \cdot 1\{X_{i}(t) = X_{(j)}(t) \} 
d t + dW_{i}(t) \big) + \frac{1}{2}S_{i}(t) dt , 
\]
\[
d \big( \sum_{i \in I} S_{i}(t) \big) = \sum_{i \in I}  S_{i}(t) \Big( 
\sum_{j \in I} \delta_{j} \cdot 1\{X_{i}(t) = X_{(j)}(t) \} + \frac{1}{2}\Big)
d t + \sum_{i \in I} S_{i}(t) dW_{i}(t)  \, . 
\]
Another application of It\^o's formula shows that the market capitalization weights 
$\mu_{i}(t) := S_{i}(t) / \sum_{j \in I} S_{j}(t) $, $ i \in I$  
satisfy 
\[
d \mu_{i}(t) = \big( \text{ finite variation part } \big) + \mu_{i}(t) \sum_{j \in I} \big( 
\delta_{ij} - \mu_j(t) \big) d W_{j}(t) , \quad i \in I\, , 
\]
where $\delta_{ij}$ is the Kronecker delta. Hence, the corresponding cross variation
processes grow at the rates 
\eq \label{eq: cros var mu}
\frac{d\langle \mu_{i}, \mu_{j} \rangle }{d t}(t) = 
\mu_{i}(t) \mu_{j}(t) \sum_{k \in I} \big( \delta_{ik} - \mu_k(t) \big) \big(\delta_{jk} - \mu_k(t) \big), \quad (i, j) \in I^{2}.
\en

For notational simplicity we will from now on write $D_{i}$ for the partial derivative with respect to the $i$-th variable and $D_{ij}$ for the second partial derivative with respect to the $i$-th and the $j$-th variables. 

A {\it portfolio} $({\mathbf \pi}_i(\cdot),\; i \in I)$ in the market above is defined as a stochastic process adapted to the Brownian filtration and such that 
\[
{\mathbf \pi}_{1}(t) + \cdots  + {\mathbf \pi}_{n}(t) = 1,\quad t\geq0.
\]
Its {\it value process} $V^{{\mathbf \pi}} (\cdot)$ is defined as a solution to 
\[
\frac{d V^{{\mathbf \pi}}(t)}{V^{{\mathbf \pi}}(t)} = \sum_{i \in I} {\mathbf \pi}_{i}(t) 
\frac{d S_{i}(t)}{S_{i}(t)} . 
\]
For example, ${\mathbf \pi}_{i}(.) = \mu_{i}(.)$, $i \in I$ gives the {\it market portfolio} and we will write $V^{\mu}(\cdot)$ for its value process. 
  
Following Fernholz \cite{F}, we introduce the family $\mathcal C$ of {\it portfolio generating functions} ${\bf G}: U \to (0, \infty)$, which are functions of class $C^{2}$ on some open neighborhood $U\subset\rr^n$ of the simplex 
\[
\Delta^{n}_{+}:= \{ x = (x_{1}, \ldots , x_{n}) \in [0,1]^{n}: x_{1} + \cdots + x_{n} = 1\}
\]
and such that the mapping $x \to x_{i} \log {\bf G}(x)$ is bounded on $\Delta^{n}_{+}$ for all $i \in I$. According to Theorem 3.1.5 of \cite{F},  
given a portfolio generating function ${\bf G} \in {\mathcal C}$, the functionally generated portfolio $({\mathbf \pi}_{i}(.),\;i\in I)$, defined by 
\[
{\mathbf \pi}_{i}(t) = \Big( D_{i} \log {\bf G}(\mu(t)) + 1 - \sum_{j \in I} \mu_{j} 
D_{j} \log {\bf G}(\mu(t)) \Big) \mu_{i}(t),\;\quad t\geq0,\;i\in I
\]
has a value process $V^{{\mathbf \pi}}(\cdot)$ which satisfies the master formula 
\eq \label{eq: master port}
\log (V^{{\mathbf \pi}}(t) / V^{\mu}(t)) = \log ({\bf G}(\mu(t))/{\bf G}(\mu(0))) 
+  \int^{t}_{0 }{\mathfrak g}(s) ds, \;\quad t\geq0, 
\en
where the drift part is 
\eq \label{eq: drift}
{\mathfrak g}(t) = \frac{-1 }{2 {\bf G}(\mu(t))} \sum_{(i,j) \in I^{2}} D_{ij} {\bf G}(\mu(t)) 
\frac{d\langle \mu_{i},  \mu_{j}\rangle}{dt} (t),\;\quad t\geq0. 
\en
Substituting (\ref{eq: cros var mu}) into this formula, we obtain 
${\mathfrak g}(\cdot)$ as a function of the market weights 
$\mu_{i}(\cdot)$, $i \in I$: 
\[
{\mathfrak g}(\cdot) = \frac{-1 }{2 {\bf G}(\mu(\cdot))} \sum_{(i,j) \in I^{2}} D_{ij} {\bf G}(\mu(\cdot)) 
 \mu_{i}(\cdot) \mu_{j}(\cdot) \sum_{k \in I} \big( \delta_{ik} - \mu_k(\cdot) \big) \big(\delta_{jk} - \mu_k(\cdot) \big). 
\]
The relative value process $V^{{\mathbf \pi}}(\cdot)/V^{\mu}(\cdot)$ is determined by the drift process ${\mathfrak g}$. 

We shall now rewrite the drift process ${\mathfrak g}$ in terms of the spacings process $Y$ of Theorem \ref{mainadd}. The ranked market weights $\mu_{(1)}(t) \le \mu_{(2) }(t) \le \cdots \le \mu_{(n)}(t)$ at a given time $t\geq0$ are obtained from the corresponding value of the spacings process by the formula
\[
\mu_{(j)}(t) = {\mathbf M}_{j}(Y(t)), \quad j \in I, 
\]
where the functions $ {\mathbf M}_{j} : [0, \infty)^{n-1} \to (0,1)$ are defined by 
\eq
\begin{split}\label{eq: Mj}
&{\mathbf M}_{1}(y_{1}, \ldots , y_{n-1}) := [1+ e^{y_{1}} + e^{y_{1} +y_{2} } + \cdots + e^{y_{1} + \cdots y_{n-1}}]^{-1}\, ,\\ 
&{\mathbf M}_{k}(y_{1}, \ldots , y_{n-1}) := e^{y_{1}+\cdots + y_{k-1}}  {\mathbf M}_{1}(y_{1}, \ldots , y_{n-1})\, , \quad  k = 2, \ldots , n. 
\end{split}
\en
For notational simplicity we will write ${\mathbf M}(Y(t))$ or $ M(t)$ for the value of the process $(\mu_{(1)}(\cdot), \ldots , \mu_{(n)}(\cdot))$ at a given time $t \ge 0$. Let us also introduce the partitions 
\[
\rr^n=\bigcup_{i\in I} Q^{(i)}_l,\quad l\in I,\qquad \rr^n=\bigcup_{l\in I} Q^{(i)}_l,\quad i\in I 
\]
such that for every $(i,l)\in I^2$ and every $x=(x_1,\dots,x_n)\in Q^{(i)}_l$ the component $x_i$ is the $l$-th ranked from the bottom in the set $\{x_1,\dots,x_n\}$.  

We recall that the domain of the portfolio generating function ${\bf G}$ is given by $\Delta_{+}^{n}$ rather than the set 
\eq
\Delta_{+,\leq}^{n}:=\{(x_1,\dots,x_n)\in \Delta_{+}^{n}:\;x_{1} \le x_{2} \le \dots \le x_{n}\}. 
\en
However, if the function ${\mathbf G}$ is permutational invariant, in the sense that
\eq \label{eq: perm inv} 
{\bf G}(x_{1}, \ldots , x_{n}) = {\bf G}(x_{p(1)}, \dots x_{p(n)})
\en
for every permutation $(p(1), \ldots , p(n))$ of $I$, then we may and will view ${\mathbf G}$ as a function on $\Delta_{+,\leq}^{n}$. Next, let us introduce the functions
\eq \label{eq: gjk}
g_{jk}(x) := \sum_{(h,i) \in I^{2}} D_{hi} {\bf G}(x) \cdot 1_{Q_{j}^{(h)}}(x) \cdot 1_{Q_{k}^{(i)}} (x) , \quad (j,k) \in I^{2}
\en
defined on the simplex $\Delta_{+}^{n}$.

Important examples of portfolio generating functions are: 
\begin{itemize}
\item ${\bf G}(x) = ( \sum_{i \in I} x_{i}^{p})^{1/p}$ for some $0 < p < 1 $ 
(diversity) or
\item ${\bf G}(x) = 1-\frac{1}{2}\sum_{i\in I}(x_i-n^{-1})^2$ (quadratic Gini coefficient) or
\item ${\bf G}(x) = (1-p)^{-1} \log ( \sum_{i \in I} x_{i}^{p})$ for some $p \neq 1$ (R\'enyi entropy) or
\item ${\bf G}(x) = - \sum_{i \in I} x_{i} \log x_{i}$ (entropy) or
\item ${\bf G}(x) = (x_{1} \cdots x_{n})^{1/n}$ (equal-weighting generating function).
\end{itemize}
Under condition \eqref{eq: perm inv} (in particular, in the latter examples) each process $g_{jk}(\mu(t))$, $t\geq0$ can be rewritten as  
\eq \label{eq: gjk tilde}
\widetilde{g}_{jk} ( {\mathbf M}(Y(t))),\; t\geq0
\en
for appropriate functions $\widetilde{g}_{jk}$ on $\Delta_{+,\leq}^{n}$ and all $(j,k)\in I^2$.  Thus, in this case, the drift process
${\mathfrak g}$ in (\ref{eq: drift}) can be written as 
\begin{equation*}
\begin{split} 
&{\mathfrak g}(t) = \frac{-1}{2{\bf G}(\mu(t))} 
\sum_{(h,i) \in I^{2}} D_{hi} {\bf G}(\mu(t)) \mu_{h}(t) \mu_{i}(t) 
\sum_{\ell \in I} \big( \delta_{hl} - \mu_l(t) \big) \big(\delta_{il} - \mu_l(t) \big) \\
&= \frac{-1}{2{\bf G}(\mu(t))} 
\sum_{(h,i,j,k) \in I^{4}} 1_{Q_{j}^{(h)}\cap Q_{k}^{(i)}}  D_{hi} {\bf G}(\mu(t))
 \mu_{h}(t) \mu_{i}(t) 
\sum_{\ell \in I} \big( \delta_{hl} - \mu_l(t) \big) \big(\delta_{il} - \mu_l(t) \big)\\
&= \frac{-1}{2{\bf G}(\mu(t))} 
\sum_{(j,k)\in I^{2}} g_{jk}(\mu(t))  \mu_{(j)}(t) \mu_{(k)}(t) 
\sum_{\ell \in I} \big( \delta_{jl} - \mu_{(l)}(t) \big) \big(\delta_{kl} - \mu_{(l)}(t) \big)\\
&=\widetilde{u}(Y(t)),\;t\geq0, 
\end{split} 
\end{equation*}
where the function $\widetilde{u}:[0,\infty)^{n-1}\rightarrow\rr$ is defined by 
\[\widetilde{u}(y) := - \frac{1}{2{\bf G}( {\mathbf M}(y))} 
\sum_{(j,k) \in I^{2}} \widetilde{g}_{jk}( {\mathbf M}(y)) {\mathbf M}_{j}(y) {\mathbf M}_{k}(y) 
\sum_{\ell \in I} \big( \delta_{jl} - {\mathbf M}_{l}(y) \big) \big(\delta_{kl} - {\mathbf M}_{l}(y) \big).
\]
Finally, using the stationary distribution $\nu$ of the spacings process described in the introduction, we define a normalized version $u:[0,\infty)^{n-1}\rightarrow\rr$ of $\tilde{u}$ by   
\eq \label{eq: u}
\, u (y) := \widetilde{u}(y) - \nu(\widetilde{u}) = \widetilde{u}(y) - \int_{[0,\infty)^{n-1}} \tilde{u}(z)\;d\nu(z).   
\en

Combining Theorem 1, (\ref{eq: master port}) and the representation of the drift process ${\mathfrak g}$ in terms of the spacings process $Y$, we can compare the value process $V^{{\mathbf \pi}}$ of the portfolio generated by a permutation invariant function ${\bf G} \in \mathcal C$ to the value process $V^{\mu}$ of the market portfolio with the same initial value. Similarly, instead of (\ref{eq: master port}), we can use 
\[
\log (V^{\mu}(t) / V^{\pi}(t)) = - \log ({\bf G}(\mu(t) ) / {\bf G}(\mu(0)) ) + \int^{t}_{0} 
(-{\mathfrak g}(s)) ds 
\]
and apply Theorem 1 with $-u$. All in all, we obtain the following corollary. 

\begin{cor}\label{port_perf} Suppose that ${\bf G}$ is permutation invariant, and that $u$ in $(\ref{eq: u})$ satisfies $0 < \text{Var}_{\nu}(u) = \sigma^{2} < \infty$, $ \lVert u \rVert_{\infty} \in (0, \infty)$ and $ \delta^{2}(u) > 0$. Then for every initial spacing distribution 
$\kappa$ of $Y(0)$ the ratios between the portfolio value $V^{{\mathbf \pi}}(\cdot)$ generated by ${\mathbf G}$ and the market portfolio value $V^{\mu}(\cdot)$ satisfy the estimates
\[
\begin{split}
&P(V^{{\mathbf \pi}}(t) / V^{\mu} (t) \ge c_{1}^{+}(t){\bf G}(\mu(t)) /{\bf G}(\mu(0)) ) \le \Big\lVert \frac{d\kappa}{d\nu} \Big\rVert_{2} 
\exp \Big[ - \frac{t}{\beta} c_{2} \Big] ,\\
&P(V^{{\mathbf \mu}}(t) / V^{\pi} (t) \ge c_{1}^{-}(t){\bf G}(\mu(0)) /{\bf G}(\mu(t)) ) \le \Big\lVert \frac{d\kappa}{d\nu} \Big\rVert_{2} 
\exp \Big[ - \frac{t}{\beta} c_{2} \Big]   
\end{split}
\]
for all $t,r,\epsilon>0$, where $c_{1}^{\pm}(t):= \exp [\{r\pm \nu(\widetilde{u})\}t]  $, $\beta$ is given by \eqref{whatisbeta} and 
\[
c_{2} := \max \Big( \frac{r^{2}}{\delta^{2}(u)} , 4 \varepsilon (\varepsilon + \sigma^{2}) \Big( \sqrt{1 + \frac{r^{2}}{2 \varepsilon (\varepsilon + \sigma^{2})^{2} \lVert u \rVert_{\infty}^{2}}} -1\Big) \Big) . 
\] 
\end{cor}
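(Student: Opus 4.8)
The plan is to observe that Corollary \ref{port_perf} is a direct repackaging of Theorem \ref{mainadd}: each of the two events in the statement is rewritten as an event of the form $\{\tfrac1t\int_0^t w(Y(s))\,ds\ge r\}$ for a suitable centered function $w$ of the spacings, after which Theorem \ref{mainadd} is quoted verbatim. Throughout we use that $\sigma_i\equiv 1$ on $I$ (in force for the rest of the paper) and that condition \eqref{deltaassump} holds, so that $\nu$ is the stationary spacing distribution and the function $u$ of \eqref{eq: u} satisfies $\nu(u)=0$ by construction; the extra hypotheses $\text{Var}_\nu(u)=\sigma^2\in(0,\infty)$, $\|u\|_\infty\in(0,\infty)$ and $\delta(u)>0$ are exactly those imposed in the corollary, and absolute continuity of $\kappa$ with $d\kappa/d\nu\in\mathbf{L}^2((\rr_+)^{n-1};\nu)$ is implicit (otherwise the right-hand sides are infinite).

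First I would exponentiate the master formula \eqref{eq: master port} to obtain $V^{\pi}(t)/V^{\mu}(t)=\big(\mathbf{G}(\mu(t))/\mathbf{G}(\mu(0))\big)\exp\!\big(\int_0^t\mathfrak{g}(s)\,ds\big)$, so that, since $\mathbf{G}>0$, the event $\{V^{\pi}(t)/V^{\mu}(t)\ge c_1^{+}(t)\,\mathbf{G}(\mu(t))/\mathbf{G}(\mu(0))\}$ coincides with $\{\int_0^t\mathfrak{g}(s)\,ds\ge\log c_1^{+}(t)\}$. Now I would substitute the representation $\mathfrak{g}(s)=\widetilde u(Y(s))$ derived above, together with $\widetilde u=u+\nu(\widetilde u)$ from \eqref{eq: u} and $\log c_1^{+}(t)=\big(r+\nu(\widetilde u)\big)t$; the two occurrences of $\nu(\widetilde u)\,t$ cancel and the event becomes precisely $\{\tfrac1t\int_0^t u(Y(s))\,ds\ge r\}$. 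Applying Theorem \ref{mainadd} to this $u$ with the given $t,r,\epsilon$ (legitimate because $u$ is a bounded measurable function of the spacings with $\nu(u)=0$ and $\text{Var}_\nu(u)=\sigma^2$) yields the first displayed bound, the quantity $c_2$ there being exactly the maximum in \eqref{maintailbound} and $\beta$ being \eqref{whatisbeta}.

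For the reciprocal I would argue identically, starting from $\log\!\big(V^{\mu}(t)/V^{\pi}(t)\big)=\log\!\big(\mathbf{G}(\mu(0))/\mathbf{G}(\mu(t))\big)-\int_0^t\mathfrak{g}(s)\,ds$. With $c_1^{-}(t)=\exp\big[(r-\nu(\widetilde u))t\big]$ and $-\mathfrak{g}(s)=-u(Y(s))-\nu(\widetilde u)$, the event $\{V^{\mu}(t)/V^{\pi}(t)\ge c_1^{-}(t)\,\mathbf{G}(\mu(0))/\mathbf{G}(\mu(t))\}$ reduces to $\{\tfrac1t\int_0^t(-u)(Y(s))\,ds\ge r\}$; since $-u$ again satisfies $\nu(-u)=0$, $\text{Var}_\nu(-u)=\sigma^2$, $\|-u\|_\infty=\|u\|_\infty$ and $\delta(-u)=\delta(u)$, a second application of Theorem \ref{mainadd} gives the second bound, with the same $\beta$ and $c_2$.

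The one substantive ingredient has already been settled in the paragraphs preceding the corollary: permutation invariance \eqref{eq: perm inv} of $\mathbf{G}$ allows the drift $\mathfrak{g}(t)$ to be realized as $\widetilde u(Y(t))$ for a genuine bounded measurable function $\widetilde u$ on $[0,\infty)^{n-1}$, via the formulas \eqref{eq: Mj}, \eqref{eq: gjk} and \eqref{eq: gjk tilde}. Granting that, the corollary carries no further difficulty --- it is bookkeeping with exponentials --- so I would not expect a genuine obstacle; the only point requiring care is matching the constants $c_1^{\pm}(t)$, $\beta$ and $c_2$ term by term with the statement of Theorem \ref{mainadd}.
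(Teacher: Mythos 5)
Your proposal is correct and follows exactly the paper's argument: the paper likewise obtains the corollary by exponentiating the master formula \eqref{eq: master port}, identifying $\mathfrak{g}(s)=\widetilde u(Y(s))$ with $\widetilde u=u+\nu(\widetilde u)$ so that each event reduces to $\{\tfrac1t\int_0^t u(Y(s))\,ds\ge r\}$ (resp.\ the same event for $-u$), and then applying Theorem \ref{mainadd}. Your bookkeeping with the constants $c_1^{\pm}(t)$, $\beta$ and $c_2$ matches the statement term by term.
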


\begin{exm}
We now examine the statement of Corollary \ref{port_perf} in the cases of the portfolios generated by the diversity, the quadratic Gini coefficient, the R\'enyi entropy, the entropy and the equal-weighting generating function defined above.  

For the {\it diversity-weighted portfolio}, that is, the portfolio generated by the diversity, we compute
\[
(D_{jk}{\bf G})(x)=(1-p)\Big(\sum_{i\in I} x_i^p \Big)^{(1/p)-2}x_j^{p-1}x_k^{p-1}+\delta_{jk}(p-1)\Big(\sum_{i\in I} x_i^p \Big)^{(1/p)-1}x_j^{p-2},
\] 
for all $(j,k)\in I^2$. In addition, we note that the vector ${\bf M}(Y(t))$, which we will abbreviate by $M(t)$ from now on, satisfies $M_1(t)<M_2(t)<\ldots<M_n(t)$ for Lebesgue almost every $t\geq0$ with probability one. Indeed, this is a consequence of the fact that the set 
\[
\{t\geq0:\;X_{(i)}(t)=X_{(j)}(t)\;\text{for\;some\;}1\leq i<j\leq n\}
\]
is a Lebesgue zero set with probability one (see \cite{pp} for more details). Hence, we may conclude $\tilde{g}_{jk}(M(t))=(D_{jk}{\bf G})(M(t))$ for Lebesgue almost every $t\geq0$ with probability one. Using the formula above for the partial derivatives $D_{jk}{\bf G}$, $(j,k)\in I^2$ and taking into account the latter observation, we obtain after some elementary computations:
\[
\tilde{u}(Y(t))=\frac{1-p}{2}\Big(1-\frac{\sum_{i\in I} (M_i(t))^{2p}}{(\sum_{i\in I} (M_i(t))^p)^2}\Big)
\]
for Lebesgue almost every $t\geq0$ almost surely. Moreover, since by definition the market weights $M_i(t)$, $i\in I$ are $[0,1]$-valued, we conclude using the Cauchy-Schwarz inequality that $\tilde{u}(Y(t))$ takes values in $\Big[0,\frac{(n-1)(1-p)}{2n}\Big]$ for Lebesgue almost every $t\geq0$ with probability one. Hence, the fluctuations of the relative performance of the diversity-weighted portfolio with respect to the market portfolio are controlled by the estimates of Corollary \ref{port_perf} with $\|u\|_\infty$ replaced by $\frac{(n-1)(1-p)}{2n}$ and $\delta(u)$ replaced by $\frac{(n-1)(1-p)}{n}$. 

\bigskip

For the {\it portfolio generated by the quadratic Gini coefficient} a similar computation yields
\[
(D_{jk}{\bf G})(x)=-\delta_{jk},\;(j,k)\in I^2
\]
and, hence, 
\[
\tilde{u}(Y(t))=\frac{\sum_{i\in I} (M_i(t))^2-2(\sum_{i\in I} (M_i(t))^3) + (\sum_{i\in I} (M_i(t))^2)^2}{2- \sum_{i\in I}(M_i(t)-n^{-1})^2}
\]
for Lebesgue almost every $t\geq0$ almost surely. Moreover, the inequality $(M_i(t))^2-2(M_i(t))^3+(M_i(t))^4\geq0$ for every $t\geq0$ and $i\in I$, and the fact that the market weights are $[0,1]$-valued and sum up to one imply:  
\[
0\leq \tilde{u}(Y(t))\leq \frac{2}{2-(1-n^{-1})^2}
\]
for Lebesgue almost every $t\geq0$ with probability one. Thus, the inequalities of Corollary \ref{port_perf} apply with $\|u\|_\infty$ replaced by $\frac{2}{2-(1-n^{-1})^2}$ and $\delta(u)$ replaced by $\frac{2}{1-\frac{1}{2}(1-n^{-1})^2}$.

\bigskip

In the case of the {\it portfolio generated by the R\'enyi entropy} we get 
\[
(D_{jk}{\bf G})(x)=\frac{p^2}{p-1}\cdot\frac{x_j^{p-1}x_k^{p-1}}{(\sum_{i\in I} x_i^p)^2}-\delta_{jk}\frac{px_j^{p-2}}{\sum_{i\in I} x_i^p}
\] 
for all $(j,k)\in I^2$. An analogous computation to the case of the diversity-weighted portfolio yields here: 
\[
\begin{split}
\tilde{u}(Y(t))= &\frac{p}{2\log(\sum_{i\in I} M_i(t)^p)}\\
&\cdot\Big(1-p+p\cdot\frac{\sum_{i \in I} (M_i(t))^{2p}}{(\sum_{i \in I} (M_i(t))^{p})^2} 
- 2\cdot\frac{\sum_{i \in I} (M_i(t))^{p+1}}{\sum_{i \in I} (M_i(t))^p}+\sum_{i\in I} M_i(t)^2\Big)
\end{split}
\]
for Lebesgue almost every $t\geq0$ almost surely. Although the values of the process $M_n(t)$, $t\geq0$ can be arbitrarily close to one, a simple analysis based on L'H\^opital's rule shows that the values of the process $|\tilde{u}(Y(t))|$, $t\geq0$ are uniformly bounded for Lebesgue almost every $t\geq0$ with probability one, so that Corollary \ref{port_perf} applies in this case as well. 

In the case of the {\it entropy-weighted portfolio}, that is, the portfolio generated by the entropy, one computes
\begin{eqnarray*}
&&(D_{jk}{\bf G})(x)=-\delta_{jk}\frac{1}{x_j},\;(j,k)\in I^2,\;{\rm and}\\
&&\tilde{u}(Y(t))=\frac{1-\sum_{i\in I} (M_i(t))^2}{-2\sum_{i\in I} M_i(t)\log M_i(t)}
\end{eqnarray*}
for Lebesgue almost every $t\geq0$ almost surely. Moreover, the estimate
\[
|\tilde{u}(Y(t))|\leq\frac{1-(M_n(t))^2}{-2M_n(t)\log M_n(t)},
\]
an analysis of the right-hand side of the latter inequality as $M_n(t)$ approaches one, and the inequality $M_n(t)\geq n^{-1}$ show that the values of the process $|\tilde{u}(Y(t))|$, $t\geq0$ are uniformly bounded for Lebesgue almost every $t\geq0$ with probability one. Hence, our Corollary \ref{port_perf} can be also applied in this case. 

\bigskip

Finally, for the {\it equal-weighted portfolio}, that is, the portfolio generated by the equal-weighting generating function, one easily checks 
\begin{eqnarray*}
&&(D_{jk}{\bf G})(x)=\frac{1}{nx_jx_k}(x_1\cdots x_n)^{1/n}\Big(\frac{1}{n}-\delta_{jk}\Big),\;(j,k)\in I^2,\;{\rm and}\\
&&\tilde{u}(Y(t))=\frac{n-1}{n}
\end{eqnarray*}
for Lebesgue almost every $t\geq0$ almost surely. Thus, this is a trivial case and, although Corollary \ref{port_perf} applies, it does not give a meaningful estimate. 

\end{exm}

\subsection{Fluctuations of the market weights}\label{marketweightsetc} On pages 46 and 52 of their survey on Stochastic Portfolio Theory \cite{FI}, Fernholz and Karatzas pose the following open questions which we restate slightly for the models considered in this article. Consider an abstract rank-based equity market model with $n$ companies as defined in the previous subsection and consider for any given time $t\geq0$ the ranked market weights:
\[
\mu_{(1)}(t) \le \mu_{(2)}(t) \le \cdots \le \mu_{(n)}(t).
\]  

\noindent What can one say about the following objects:
\begin{enumerate}
\item[(i)] approximate laws of $\mu_{(1)}(t)$ and $\mu_{(n)}(t)$, 
\item[(ii)] fluctuations of the moving averages
\[
\frac{1}{T} \int_0^T \mu_{(k)}(t)dt,\quad k=1,\ldots,n.
\]
\item[(iii)] In addition, the following question is of interest: What is the approximate deviation from $1/n$ of the quantity
\[
\frac{1}{T} \left\{ \text{amount of time the $i$-th market weight has rank $k$ during $[0,T]$}  \right\}.
\]
\end{enumerate}

Our estimates on the rate of convergence to equilibrium will allow us to partially answer each of these questions. 

\bigskip

To answer question (ii), we recall from the previous subsection that at any given time each ranked market weight can be written as a time-independent continuous bounded function of the vector of spacings. Thus, our Theorem \ref{mainadd} can be applied directly to the moving averages in question (ii). It gives completely explicit estimates on the fluctuations of the latter, provided that one can compute the first two moments of the corresponding ranked market weight in equilibrium. This is in general a daunting task. In the following theorem we provide formulas for all moments of the ranked market weights in the Atlas model under their stationary distribution. Although not explicitly numerical, they can be effectively computed via a software such as Mathematica. This also gives a partial answer to question (i) for the Atlas model. Recall that the latter is the special case of the particle system in \eqref{ranksde} with $\delta_1=\delta>0$, $\delta_2=\ldots=\delta_n=0$, $\sigma_1=\ldots=\sigma_n=1$. 

\begin{thm} 
Consider the Atlas model with $\delta$ being the drift of the lowest ranked particle. In equilibrium, the law of the ranked market weights is determined by the following Laplace transform:
\eq\label{laplacemuk}
\tau(\theta):=E \left[\exp\left(-\frac{\theta}{\mu_{(k)}}\right)\right]= e^{-\theta} \left(  \phi(\theta)\right)^{n-k} 
E[\psi_{\overline{\beta}}(\theta)^{k-1}], 
\en
where
\begin{enumerate}
\item[(i)] $\phi$ is the Laplace transform of $e^W$ with $W$ being an Exponential random variable of parameter $2\delta / n$,
\item[(ii)] $\psi_{\overline{\beta}}$ is the conditional Laplace transform of 
\[
 \left( \frac{\overline{\beta}}{(1-\overline{\beta})V+\overline{\beta}}  \right)^{n/2\delta} , 
\]
conditional on the value of $\overline{\beta}$, where $\overline{\beta}$ is Beta$(n-k+1,k)$ distributed, $V$ is uniformly distributed on $(0,1)$, and $\overline{\beta}$, $V$ are independent. 
\end{enumerate}
In particular, in equilibrium, we obtain all moments of $\mu_{(k)}$ by the formula
\[
E\big[(\mu_{(k)})^{r}\big] = \frac{1}{(r-1)!} \int_0^\infty \theta^{r-1} \tau(\theta) d\theta, \qquad r=1,2,\ldots.
\]
\end{thm}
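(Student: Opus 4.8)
The plan is to obtain an explicit representation of the stationary law of $\mu_{(k)}$ in terms of order statistics of i.i.d.\ $\mathrm{Beta}$ random variables and then to read off $\tau$ by conditioning on one of these order statistics. Recall from Section~\ref{marketweightsetc} that $\mu_{(k)}(t)=\mathbf{M}_k(Y(t))$, so that in equilibrium $\mu_{(k)}$ has the law of $\mathbf{M}_k(Y)=e^{S_{k-1}}\big/\sum_{j=0}^{n-1}e^{S_j}$, with $S_j:=Y_1+\cdots+Y_j$, $S_0:=0$, and $Y\sim\nu$; for the Atlas model $\nu$ is the product of Exponential laws of rates $\tilde\alpha_j=\alpha_j=c(n-j)$, where $c:=2\delta/n$. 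Now let $W_1,\dots,W_{n-1}$ be i.i.d.\ with distribution function $x^{c}$ on $[0,1]$ (i.e.\ $\mathrm{Beta}(c,1)$), put $W_{(n)}:=1$, and let $W_{(1)}<\cdots<W_{(n-1)}$ be the order statistics. By the classical R\'enyi representation of order statistics, the variables $-\log(W_{(i)}/W_{(i+1)})$, $i=1,\dots,n-1$, are independent and Exponential of rate $ic$; comparing consecutive differences shows that $(S_0,S_1,\dots,S_{n-1})$ has the same joint law as $(-\log W_{(n)},-\log W_{(n-1)},\dots,-\log W_{(1)})$, so that $e^{S_j}$ has the law of $1/W_{(n-j)}$ and hence
\[
\mu_{(k)}\ \text{has the law of}\ \frac{1/W_{(n-k+1)}}{\sum_{l=1}^{n}1/W_{(l)}} .
\]
This expression is invariant under multiplying all $W_{(l)}$ by a common factor, and for $\mathrm{Beta}(c,1)$ samples the maximum is independent of the vector of ratios to the maximum; therefore the deterministic entry $W_{(n)}=1$ may be replaced by a genuine $n$-th order statistic, i.e.\ the identity in law above persists if $W_{(1)}<\cdots<W_{(n)}$ are taken to be the order statistics of $n$ i.i.d.\ $\mathrm{Beta}(c,1)$ variables.

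With that in hand, I would write $1/\mu_{(k)}=\sum_{l=1}^{n}W_{(n-k+1)}/W_{(l)}$ and isolate the term $l=n-k+1$ (equal to $1$), the ``lower'' block $l\le n-k$, and the ``upper'' block $l\ge n-k+2$. Conditioning on $W_{(n-k+1)}=w$, the usual order-statistics decomposition makes $(W_{(1)},\dots,W_{(n-k)})$ and $(W_{(n-k+2)},\dots,W_{(n)})$ conditionally independent: the first is the set of order statistics of $n-k$ i.i.d.\ samples from $\mathrm{Beta}(c,1)$ rescaled to $[0,w]$, the second of $k-1$ i.i.d.\ samples from $\mathrm{Beta}(c,1)$ conditioned to $[w,1]$. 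In the lower block, writing $W_{(l)}=wB_l$ turns $\sum_{l\le n-k}w/W_{(l)}$ into $\sum_{l=1}^{n-k}1/B_l$ with $B_l$ i.i.d.\ $\mathrm{Beta}(c,1)$; this is free of $w$, hence independent of the upper block, and since $1/B_1$ has the law of $e^{W}$ with $W$ Exponential of rate $c=2\delta/n$ (compare tail functions), the Laplace transform of the lower block equals $\phi(\theta)^{\,n-k}$. In the upper block, a $\mathrm{Beta}(c,1)$ variable conditioned to $[w,1]$ can be written as $(w^c+(1-w^c)V)^{1/c}$ with $V$ uniform on $(0,1)$, so each $w/W_{(l)}$ becomes $\big(w^c/(w^c+(1-w^c)V)\big)^{1/c}$; setting $\overline{\beta}:=W_{(n-k+1)}^{\,c}$, which is $\mathrm{Beta}(n-k+1,k)$-distributed by the uniform-order-statistics representation, the upper block given $\overline{\beta}$ is a sum of $k-1$ conditionally i.i.d.\ copies of $\big(\overline{\beta}/((1-\overline{\beta})V+\overline{\beta})\big)^{n/2\delta}$, so its conditional Laplace transform is $\psi_{\overline{\beta}}(\theta)^{\,k-1}$. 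Multiplying the three contributions, using that the lower block is independent of the pair (upper block, $W_{(n-k+1)}$), yields $\tau(\theta)=e^{-\theta}\,\phi(\theta)^{\,n-k}\,E\big[\psi_{\overline{\beta}}(\theta)^{\,k-1}\big]$, which is \eqref{laplacemuk}. The moment formula then follows from the identity $\int_0^\infty\theta^{r-1}e^{-\theta/x}\,d\theta=(r-1)!\,x^{r}$ for $x>0$: apply it with $x=\mu_{(k)}$, take expectations, and interchange expectation and integral by Tonelli's theorem, which is legitimate because $0<\mu_{(k)}<1$.

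The step I expect to be the crux is the first one: identifying the cumulative sums $S_j$ of the stationary spacings of the Atlas model with minus the logarithms of the order statistics of $n-1$ i.i.d.\ $\mathrm{Beta}(2\delta/n,1)$ variables, and then absorbing the spurious deterministic coordinate $W_{(n)}=1$ into an extra i.i.d.\ sample through the scale-invariance of $\mu_{(k)}$ and the independence of a $\mathrm{Beta}(c,1)$ maximum from the ratios below it. After that, only the ``split at one order statistic'' decomposition and two explicit changes of variable are needed, namely that $1/B$ for $B\sim\mathrm{Beta}(c,1)$ has the law of $e^{W}$ with $W$ Exponential of rate $c$, and the uniform parametrisation of a truncated $\mathrm{Beta}$ law.
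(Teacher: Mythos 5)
Your proof is correct and follows essentially the same route as the paper: both start from the Pal--Pitman product-of-exponentials law for the stationary spacings, split $1/\mu_{(k)}$ into $1$ plus two independent blocks, identify one block as a sum of $n-k$ i.i.d.\ copies of $e^{W}$ with $W$ Exponential of rate $2\delta/n$, and identify the other, conditionally on a Beta$(n-k+1,k)$ order statistic, as a sum of $k-1$ conditionally i.i.d.\ terms. The only difference is packaging: you set up a single order-statistics representation with $n$ i.i.d.\ Beta$(2\delta/n,1)$ variables at the outset, whereas the paper applies the R\'enyi representation separately to each block and uses Dirichlet aggregation for the uniform order statistics; the underlying computation is the same.
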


\begin{proof}
We fix a $k\in\{1,\dots,n\}$ and recall the following result from Pal and Pitman \cite[Theorem 8]{pp}. Let $\xi_1, \xi_2, \ldots, \xi_{n-1}$ be independent Exponential random variables with respective parameters 
\[
\frac{2\delta}{n} \left( n - i  \right), \qquad i=1,2,\ldots, n-1.
\]

Then take $\xi_0$ to be any random variable and set $\eta_i:=\xi_0+ \xi_1 + \ldots + \xi_{i-1}$, $i=1,2,\ldots,n$. Then, the following equality in law holds:
\[
\mu_{(k)} = \frac{e^{\eta_k}}{\sum_{j=1}^n e^{\eta_j} }, \qquad k=1,2,\ldots, n.
\]
Note that $\xi_0$ does not play any role, since it gets cancelled in the latter fraction. 

Thus,
\eq\label{decompmuk}
\begin{split}
\frac{1}{\mu_{(k)}}&= \frac{\sum_{j=1}^{k-1} e^{\eta_j}}{e^{\eta_k}} + 1 + \frac{\sum_{j=k+1}^n e^{\eta_j}}{e^{\eta_k}}=  A_k + 1 + \Sigma_k.
\end{split}
\en
Hereby,
\[
A_k = \sum_{j=1}^{k-1} \exp\Big(-\sum_{l=j}^{k-1} \xi_l \Big), \quad \Sigma_k= \sum_{j=k+1}^n \exp\Big( \sum_{l=k}^{j-1} \xi_l  \Big)
\]
are independent random variables. 

\bigskip

Next, let $\vartheta_1, \vartheta_2, \ldots, \vartheta_{m}$ denote i.i.d. Exponential random variables with some parameter $\alpha$. Then the R\'enyi representation of the order statistics of i.i.d. Exponential random variables states that the random variables 
\[
\vartheta_{(i+1)} - \vartheta_{(i)}, \quad i=1,2,\ldots, m-1,
\]
are independent and Exponentially distributed with respective parameters $\alpha(m-i)$, $i=1,2,\ldots, m-1$. Hereby, $\vartheta_{(1)}\leq\vartheta_{(2)}\leq\ldots\leq\vartheta_{(m)}$ are the order statistics of the vector $(\vartheta_1, \vartheta_2, \ldots, \vartheta_{m})$. We shall use this representation to express $A_k$ and $\Sigma_k$ in a symmetric way. 

Now, set $m=n-k$, $\alpha=2\delta/n$ and define $\Delta_i:= \vartheta_{(i+1)} - \vartheta_{(i)}$, $i=1,2,\ldots, m-1$, $\Delta_0:=\vartheta_{(1)}$. Then, we get the following equality in distribution:
\[
\Delta_i = \xi_{k+i}, \quad 0\le i\le n-k-1.
\]
Hence, it holds  
\[
\sum_{i=1}^{n-k} e^{\vartheta_i} = \sum_{i=1}^{n-k} e^{\vartheta_{(i)}} = \sum_{i=1}^{n-k}\exp\Big(\sum_{l=0}^{i-1}\Delta_l\Big)= \Sigma_k
\] 
in distribution. Thus, $\Sigma_k$ is the sum of $(n-k)$ i.i.d. random variables. In particular,
\eq\label{sigmaklaplace}
E \left(e^{-\theta \Sigma_k}\right) = \left( \phi_\alpha(\theta)\right)^{n-k},
\en
where $\phi_{\alpha}$ is the Laplace transform of $e^{\vartheta_1}$ given by
\eq
\phi_{\alpha}(\theta)=\int_0^\infty\alpha  \exp\big(-\alpha x - \theta e^x\big)\;dx. 
\en

\bigskip

The case of $A_k$ is a bit more convoluted. First, let $T_1, T_2, \ldots, T_n$ be i.i.d. Exponential random variables with parameter $\alpha=2\delta/n$, which are independent of the $\vartheta_i$'s. Setting $R_j=\exp(-T_j)$ for $j=1,\ldots,n$, it is clear that each random variable $R_j$ is distributed according to the Beta distribution Beta($\alpha,1$). Hence, we can write $R_j=U_j^{1/\alpha}$, $j=1,\ldots,n$ with suitable i.i.d. uniformly on $(0,1)$ distributed random variables $U_1, U_2, \ldots, U_n$. 

Now, using the R\'enyi representation again we obtain the following identity in distribution:
\eq\label{abyu}
A_k = \sum_{j=1}^{k-1} \frac{e^{-T_{(k)}}}{e^{-T_{(j)}}} = \sum_{j=1}^{k-1} \frac{R_{(n-k+1)}}{R_{(n-j+1)}}
=\sum_{j=1}^{k-1} \left( \frac{U_{(n-k+1)}}{U_{(n-j+1)}}  \right)^{1/\alpha}=\sum_{j=n-k+2}^{n} \left( \frac{U_{(n-k+1)}}{U_{(j)}}  \right)^{1/\alpha},
\en 
where $T_{(1)}\le T_{(2)}\le\ldots\le T_{(n)}$, $R_{(1)}\le R_{(2)}\le\ldots\le R_{(n)}$ and $U_{(1)}\le U_{(2)}\le\ldots\le U_{(n)}$ are the order statistics of the vectors $(T_1,T_2,\ldots,T_n)$, $(R_1,R_2,\ldots,R_n)$ and $(U_1,U_2,\ldots,U_n)$, respectively. 

We now employ some known identities related to the Uniform distribution. First, we note that the vector
\[
\left( U_{(1)}, U_{(2)} - U_{(1)}, U_{(3)} - U_{(2)}, \ldots, U_{(n)} - U_{(n-1)}, 1 - U_{(n)} \right)
\]
is distributed uniformly over the $(n+1)$-simplex $\{x\in \rr^{n+1}:\; x_i\ge 0, \; \sum_i x_i=1  \}$, i.e. as Dirichlet$(1,1,\ldots,1)$.

By the aggregation rule for the Dirichlet distribution the vector
\[
\left( U_{(n-k+1)}, U_{(n-k+2)} - U_{(n-k+1)}, \ldots, U_{(n)} - U_{(n-1)}, 1 - U_{(n)} \right)
\]
has the Dirichlet$(n-k+1, 1,\ldots,1)$ distribution on the $(k+1)$-simplex. 

Hence, by the usual Beta-Gamma algebra, we see that
\[
\frac{1}{1-U_{(n-k+1)}}\left( U_{(n-k+2)} - U_{(n-k+1)}, \ldots, U_{(n)} - U_{(n-1)}, 1- U_{(n)} \right)
\]
is distributed as Dirichlet$(1,1,\ldots,1)$ over the $k$-simplex independently of $U_{(n-k+1)}$, which is distributed as Beta$(n-k+1, k)$.

As a corollary, taking partial sums, we deduce that the law of the vector 
\[
\left( \frac{U_{(n-k+2)}-U_{(n-k+1)}}{1-U_{(n-k+1)}}, \frac{U_{(n-k+3)}-U_{(n-k+1)}}{1-U_{(n-k+1)}}, \ldots, \frac{U_{(n)}-U_{(n-k+1)}}{1-U_{(n-k+1)}}  \right)
\]
is the same as that of the order statistics of $(k-1)$ i.i.d. Uniform$(0,1)$ random variables $V_1, \ldots, V_{k-1}$ independent of 
$\overline{\beta}:=U_{(n-k+1)}$.   

Using the expression in \eqref{abyu} we obtain that $A_k$ has the same law as
\[
\sum_{j=1}^{k-1} \left( \frac{\overline{\beta}}{(1-\overline{\beta})V_j+\overline{\beta}}  \right)^{1/\alpha}.
\]
Hence,
\[
E\left( e^{-\theta A_k}  \right) = 
E[\psi_{\overline{\beta}}(\theta)^{k-1}],
\]
where 
$ \psi_{\overline{\beta}} $
is the conditional 
Laplace transform of $$\left(\frac{\overline{\beta}}{(1-\overline{\beta})V_1+ \overline{\beta}}\right)^{1/\alpha}$$
conditioned on $\overline{\beta}$. 

\bigskip

Hence, from \eqref{decompmuk} we get
\[
\tau(\theta)=E\left( e^{-\theta/\mu_{(k)} }  \right) =e^{-\theta} \phi_\alpha(\theta)^{n-k} E[\psi_{\overline{\beta}}(\theta)^{k-1}] ,
\]
which leads to \eqref{laplacemuk}.
\bigskip

To find the moments, we use the following fundamental identity: 
for any $r > 0$,
we have
\[
\frac{1}{\Gamma(r)}\int_0^\infty \theta^{r-1} e^{-\theta/\mu} d\theta = \mu^{r}.
\]
Replacing $\mu$ by $\mu_{(k)}$ above and interchanging expectation and integral we get
\[
E[(\mu_{(k)})^{r}] = \frac{1}{\Gamma(r)} \int_0^\infty \theta^{r-1} \tau(\theta) d\theta.
\] 
This completes the proof.
\end{proof}
\bigskip

Unfortunately, the beautiful identities provided by the Atlas model do not extend to more general models. Asymptotic derivation of moments (when $n$ tends to infinity) is possible in certain regimes due to an approximation by the atoms of the Poisson-Dirichlet distribution. Please see the article by Chatterjee and Pal \cite{CP} for the details. Formulas for moments in the two-parameter Poisson-Dirichlet model can be found in Pitman and Yor \cite[Proposition 17]{PY}.

\bigskip

We now give an answer to question (iii) in the case of the particle system in \eqref{ranksde} under the condition \eqref{deltaassump}. Recall that we assume $\sigma_i=1$, $i\in I$ throughout.

\begin{thm}
Let 
\[
\tilde{X}(t)=\Big(X_1(t)-n^{-1}\sum_{i\in I} X_i(t),\dots,X_n(t)-n^{-1}\sum_{i\in I} X_i(t)\Big),\;t\geq0
\]
be the centered version of the particle system in \eqref{ranksde} and assume that \eqref{deltaassump} holds. Then the process $\tilde{X}$ is Markovian and possesses a unique invariant distribution $\tilde{\nu}$. Moreover, for every measure $\kappa$ which is absolutely continuous with respect to $\tilde{\nu}$ and such that $\frac{d\kappa}{d\tilde{\nu}}$ is square integrable with respect to $\tilde{\nu}$, one has for all $t,r,\epsilon>0$ the estimate
\begin{eqnarray*}
&&P\left( \frac{1}{t} \int_0^t u(\tilde{X}(s)) ds \ge r  \right)\le \\
&&\Big\|\frac{d\kappa}{d\tilde{\nu}}\Big\|_2 \exp\left[ -\frac{t}{C_{P}}
\max\left( \frac{r^2}{\delta^2(u)}, 4\epsilon(\epsilon+ \sigma^2)\left( \sqrt{1 + \frac{r^2}{2\epsilon(\epsilon + \sigma^2)^2 \norm{u}_\infty^2}} -1   \right) \right)  \right]
\end{eqnarray*}
for all bounded measurable functions $u$ provided that the initial value $\tilde{X}(0)$ is distributed according to $\kappa$, $\tilde{\nu}(u)=0$ and $Var_{\tilde{\nu}}(u)=\sigma^2$. Hereby, $C_P$ is a positive constant depending only on $n$ and $\delta_1,\dots,\delta_n$ (see \eqref{cent_poin_const} for an explicit expression).

In particular, the latter estimate holds for functions of the form 
\[
u(x)=1_{\{x_i=x_{(j)}\}}-n^{-1},\quad (i,j)\in I^2 
\]
with $\sigma^2=n^{-2}(n-1)$, $\delta(u)=1$ and $\norm{u}_\infty=1-n^{-1}$. 
\end{thm}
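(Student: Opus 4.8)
The plan is to run the argument of Theorem \ref{mainadd} for the process $\tilde X$: realize $\tilde X$ as a reversible diffusion, establish an explicit Poincar\'e inequality for its invariant measure, and then invoke Theorem 3.1 of \cite{GLWY} verbatim. Subtracting $n^{-1}\sum_i X_i$ from \eqref{ranksde} gives
\[
d\tilde X_i(t)=\Big(\sum_{j\in I}(\delta_j-\overline\delta)1_{\{X_i(t)=X_{(j)}(t)\}}\Big)dt+dW_i(t)-\tfrac1n\sum_{k\in I}dW_k(t),\qquad i\in I,
\]
so $\tilde X$ takes values in the hyperplane $V:=\{x\in\rr^n:\sum_i x_i=0\}$, has covariance equal to the orthogonal projection onto $V$ (the identity on $V$), and a drift depending on $x$ only through the ranking of its coordinates; hence $\tilde X$ is Markovian. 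On the chamber $\{x_{p(1)}<\dots<x_{p(n)}\}$ the drift is the ($V$-)gradient of the linear function $2\sum_k(\delta_k-\overline\delta)x_{p(k)}$, and these linear functions agree across walls (the difference across the wall separating ranks $j$ and $j+1$ equals $2(\delta_j-\delta_{j+1})(x_{(j)}-x_{(j+1)})$, which vanishes there), so there is a globally continuous, piecewise-linear, permutation-invariant potential $\Phi$ on $V$ with $\nabla\Phi$ equal to twice the drift, namely
\[
\Phi(x)=-\sum_{k=1}^{n-1}\alpha_k\big(x_{(k+1)}-x_{(k)}\big),
\]
with $\alpha_k$ as in \eqref{deltaassump}. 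Following the Dirichlet-form construction of \cite{BCP} as in the first subsection of Section \ref{skewsymmetry} --- the walls $\{x_i=x_j\}$ being now ``transparent'' rather than reflecting, which only enlarges the form domain to all of $C^1(V)$ --- $\tilde X$ is the symmetric Markov process associated with $\mathcal E(f,g)=\tfrac12\int_V\iprod{\nabla f,\nabla g}\,e^{\Phi}\,dx$; since $\alpha_k>0$, $e^{\Phi}$ is integrable over $V$ (on each of the $n!$ chambers it is, in spacing coordinates, a constant multiple of a product of exponential densities). Normalizing yields the invariant probability $\tilde\nu$, whose uniqueness follows from irreducibility of $\tilde X$ on $V$ together with the positive recurrence of the spacings under \eqref{deltaassump} (\cite{IPBKF}).

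The heart of the proof is a Poincar\'e inequality for $\tilde\nu$ with an explicit constant $C_P$. Here I would use that $\tilde\nu$ is invariant under the symmetric group acting by permuting coordinates (the $A_{n-1}$ reflection group on $V$), and that on the fundamental chamber $F:=\{x\in V:x_1<\dots<x_n\}$ the measure $\tilde\nu$ is, up to the $n!$-fold symmetry, the invariant measure of the normally reflected Brownian motion on $\overline F$ with drift $(\delta_1-\overline\delta,\dots,\delta_n-\overline\delta)$ and identity covariance, whose law in the spacing coordinates $y_k=x_{(k+1)}-x_{(k)}$ is the product of the $\mathrm{Exp}(\alpha_k)$ distributions. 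By Lemma \ref{exppoincare} and the gradient comparison \eqref{DFcompare} (with the same matrix $A$, whose column space is exactly $V$, so the pointwise comparison transfers to $F$), this chamber measure satisfies the Neumann Poincar\'e inequality with constant $4\lambda_n/\min_k\alpha_k^2$. It remains to lift this bound to all of $V$: for a permutation-invariant $f$ it is automatic, while for a general mean-zero $f$ I would split $f=\bar f+(f-\bar f)$ into its symmetrization and remainder, bound $\text{Var}_{\tilde\nu}(\bar f)$ by the chamber inequality (using $\norm{\nabla\bar f}\le$ the average of the permuted $\norm{\nabla f}$ together with Jensen and the $S_n$-invariance of $\tilde\nu$), and bound $\int_V|f-\bar f|^2\,d\tilde\nu$ by the spectral gaps of the non-trivial isotypic components, each governed by a mixed Dirichlet--Neumann problem on the convex chamber $F$ whose bottom eigenvalue dominates the Neumann spectral gap (in the spacing picture this reduces, via the product-exponential structure, to the corresponding one-dimensional comparison). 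This produces an explicit $C_P$ of the form $c\,\lambda_n/\min_k\alpha_k^2$, coinciding with $\beta$ of \eqref{whatisbeta} up to a dimensional factor, which is the quantity \eqref{cent_poin_const}. (As a sanity check, $\Phi$ is continuous, piecewise-linear, and bends upward across every wall with $|\cdot|$-coefficient $\alpha_k>0$, so $-\Phi$ is convex and $\tilde\nu$ is log-concave; a Poincar\'e inequality thus holds on general grounds, and the reflection-group argument is needed only to make the constant explicit.)

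Granting the Poincar\'e inequality, Theorem 3.1 of \cite{GLWY} applies to $\tilde X$ --- reversible with respect to $\tilde\nu$ and with Poincar\'e constant $C_P$ --- and delivers, for every $\kappa$ absolutely continuous with respect to $\tilde\nu$ with $d\kappa/d\tilde\nu\in\mathbf{L}^2(\tilde\nu)$ and every bounded measurable $u$ with $\tilde\nu(u)=0$, exactly the stated tail estimate, by the same computation as in the proof of Theorem \ref{mainadd} with $\beta$ replaced by $C_P$. For the special choice $u(x)=1_{\{x_i=x_{(j)}\}}-n^{-1}$ the remaining constants are immediate: $u$ takes the two values $-n^{-1}$ and $1-n^{-1}$, so $\norm{u}_\infty=1-n^{-1}$ and $\delta(u)=1$; and since $\tilde\nu$ is exchangeable in the coordinates and charges the ties $\{x_i=x_{i'}\}$ with zero mass, $\tilde\nu(\{x_i=x_{(j)}\})=1/n$ for every $(i,j)\in I^2$, whence $\tilde\nu(u)=0$ and $\text{Var}_{\tilde\nu}(u)=\tfrac1n(1-\tfrac1n)^2+(1-\tfrac1n)\tfrac1{n^2}=\tfrac{n-1}{n^2}$. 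I expect the lifting of the Poincar\'e inequality from $F$ to $V$ --- equivalently, showing that the ``transparency'' of the walls for $\tilde X$ (in contrast to the reflecting boundary underlying Theorem \ref{mainadd}) does not worsen the spectral gap --- to be the main obstacle, since it is exactly here that the discontinuity of the drift across $\{x_i=x_j\}$ prevents one from quoting Theorem \ref{mainadd} (or its RBM substrate) as a black box.
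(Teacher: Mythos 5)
Your overall architecture (realize $\tilde X$ as a diffusion on the hyperplane reversible with respect to $\tilde\nu\propto e^{\Phi}\,dx$, prove a Poincar\'e inequality for $\tilde\nu$, and feed it into Theorem 3.1 of \cite{GLWY}; then compute $\sigma^2$, $\delta(u)$, $\norm{u}_\infty$ for the indicator functionals) is exactly the paper's, and the first and last steps are fine. The problem is that the one step you yourself flag as ``the main obstacle'' --- the Poincar\'e inequality for $\tilde\nu$ on all of $V$ --- is precisely the content of the proof, and your sketch of it has a genuine gap. The lifting from the Neumann problem on the fundamental chamber to the full hyperplane requires a lower bound on the bottom eigenvalue of the weighted Dirichlet form restricted to each non-trivial isotypic component of $S_n$; you assert that each such component is ``governed by a mixed Dirichlet--Neumann problem whose bottom eigenvalue dominates the Neumann spectral gap,'' but no argument is given, and for the drifted (non-flat) generator with the piecewise-linear potential $\Phi$ this comparison is not routine. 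Your fallback --- that $-\Phi$ is convex so $\tilde\nu$ is log-concave and a Poincar\'e inequality holds ``on general grounds'' --- is actually false under \eqref{deltaassump} alone: writing $-\Phi(x)=-2\sum_j(\delta_j-\overline\delta)\,x_{(j)}$, convexity would force the coefficients $-2(\delta_j-\overline\delta)$ to be nondecreasing, i.e.\ $\delta_1\ge\dots\ge\delta_n$, which \eqref{deltaassump} does not imply (e.g.\ $n=4$, $\delta=(3,-2,2,-3)$ satisfies \eqref{deltaassump} but is not monotone). So as written the proposal does not close.

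For comparison, the paper avoids the chamber decomposition entirely. It uses the Lyapunov-function criterion for Poincar\'e inequalities from \cite{BBCG}: the inequality \eqref{cent_poin} holds with constant $C_P$ as soon as there is a function $V\ge 1$ in $H^1(\mathfrak H;\tilde\nu)$ with $\gen V/V\le -1/(2C_P)$ almost everywhere. Taking $V=e^{(c/2)\sum_k\Phi_k(\Theta(x))\alpha_k}$ (a power of $1/\tilde\nu$) and computing $\gen V/V$ chamber by chamber gives
\[
\frac{\gen V}{V}\le\Big(\frac{c^2}{2}-c\Big)\norm{\tilde\mu}^2,
\]
and letting $c\uparrow1$ yields the explicit constant \eqref{cent_poin_const}, $C_P=1/\sum_{i}(\delta_i-\overline\delta)^2$ --- note this is not of the form $c\,\lambda_n/\min_k\alpha_k^2$ that you predicted. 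If you want to salvage your route, you would need to actually prove the eigenvalue comparison across isotypic components; the Lyapunov-function computation is both shorter and gives the constant directly.
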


\noindent{\it Proof.} 1) The Markov property and the existence and uniqueness of the invariant distribution of the process $\tilde{X}$ were shown in Theorem 8 of \cite{pp}. Thus, we only need to prove the inequality in the statement of the theorem. To this end, we introduce for each vector $x\in\rr^n$ a permutation $\pi(x)$ of the set $\{1,\dots,n\}$ such that $x_{\pi(x)(1)}\leq\dots\leq x_{\pi(x)(n)}$ holds. Since the process $\tilde{X}$ is a diffusion process with state space
\[
\mathfrak{H}=\{x\in\rr^n:\;x_1+\dots+x_n=0\},
\]
it is a Feller process, the space $C^\infty_c(\mathfrak{H})$ is a core for its generator $\gen$ and on that space the generator is given by    
\eq
(\gen f)(x)=\frac{1}{2}\sum_{i,j=1}^n a_{ij}\frac{\partial^2 \tilde{f}}{\partial x_i\partial x_j}(x) 
+\sum_{i=1}^n \tilde{\mu}_{\pi(x)^{-1}(i)}\frac{\partial \tilde{f}}{\partial x_i}(x),\quad x\in \mathfrak{H}, 
\en
where $a_{ij}=\delta_{ij}-n^{-1}$, $\tilde{\mu}_i=\delta_i-n^{-1}\sum_{j\in I} \delta_j$ and $\tilde{f}$ is the composition of the projection of vectors in $\rr^n$ onto $\mathfrak{H}$ and $f$ (see chapter 18 of \cite{Ka} for the details). 

Next, we define the cone
\[
\mathfrak{H}_{\leq}=\{x\in \mathfrak{H}:\;x_1\leq\dots\leq x_n\}
\] 
and introduce the mapping $\Theta:\,\rr^n\rightarrow\rr^n$, which arranges the coordinates of a vector $x\in\rr^n$ in ascending order, as well as the mapping 
\[
\Phi:\;\mathfrak{H}_{\leq}\rightarrow(\rr_+)^{n-1},\quad x\mapsto(x_2-x_1,\ldots,x_n-x_{n-1}),
\] 
which maps vectors in $\mathfrak{H}_{\leq}$ to the corresponding vectors of spacings. We recall from Theorem 8 in \cite{pp} the following facts. The invariant distribution $\tilde{\nu}$ of the process $\tilde{X}$ is absolutely continuous with respect to the Lebesgue measure on $\mathfrak{H}$, its density is proportional to $e^{-\sum_{k=1}^{n-1}\Phi_k(\Theta(x))\alpha_k}$ and the process $\tilde{X}$ is reversible with respect to $\tilde{\nu}$. 

\medskip

\noindent 2) In view of the results in step 1, as well as Theorem 3.1 in \cite{GLWY} it suffices to show that the Poincar\'e inequality 
\eq\label{cent_poin}
\int_{\mathfrak{H}} f(x)^2 e^{-\sum_{k=1}^{n-1}\Phi_k(\Theta(x))\alpha_k} \;dx
\leq C_P \int_{\mathfrak{H}} (-\gen f)(x)f(x)e^{-\sum_{k=1}^{n-1}\Phi_k(\Theta(x))\alpha_k}\; dx
\en
holds for a suitable constant $C_P>0$, whereby the integration is performed with respect to the Lebesgue measure on the hyperplane $\mathfrak{H}$. To this end, let $\Sigma$ be a positive definite symmetric $n\times n$-matrix such that $\Sigma^2=(a_{ij})_{1\leq i,j\leq n}$. Then the same computation as on the top of page 64 in \cite{BBCG}, but with $\nabla$ replaced by $\Sigma\nabla$, shows that inequality \eqref{cent_poin} is fulfilled provided that there exists a function $V:\;\mathfrak{H}\rightarrow[1,\infty)$ which belongs to $H^1(\mathfrak{H};\tilde{\nu})$ and satisfies
\eq\label{cent_lyap}
\frac{\gen V}{V}\leq -\frac{1}{2C_P} 
\en
almost everywhere on $\mathfrak{H}$. Hereby, $H^1(\mathfrak{H};\tilde{\nu})$ is the space of square integrable functions with respect to $\tilde{\nu}$, whose gradient exists in the weak sense and is square integrable with respect to $\tilde{\nu}$. 

We claim that there is a $0<c<1$ such that the function
\eq
V(x)=e^{(c/2)\sum_{k=1}^{n-1}\Phi_k(\Theta(x))\alpha_k} 
\en
defined on $\mathfrak{H}$ has the desired properties. Indeed, $V$ is a Lipschitz function and, thus, differentiable almost everywhere. Moreover, the condition $0<c<1$ shows that $V$ belongs to $H^1(\mathfrak{H};\tilde{\nu})$. In addition, it holds $\sum_{k=1}^{n-1}\Phi_k(\Theta(x))\alpha_k\geq0$ by definition, so that we have $V\geq1$. Finally, we compute
\begin{eqnarray*}
\frac{(\gen V)}{V}(x)&=&\frac{c^2}{8}\sum_{i,j=1}^n a_{\pi(x)(i)\pi(x)(j)} (\alpha_{i-1}-\alpha_i)(\alpha_{j-1}-\alpha_j)\\
&&+\frac{c}{2}\sum_{i=1}^n \tilde{\mu}_{\pi(x)^{-1}(i)}(\alpha_{\pi(x)^{-1}(i)-1}-\alpha_{\pi(x)^{-1}(i)})\\
&=&\frac{c^2}{2}\sum_{i,j=1}^n a_{ij}\tilde{\mu}_i\tilde{\mu}_j-c\sum_{i=1}^n \tilde{\mu}_{\pi(x)^{-1}(i)}^2
\leq\Big(\frac{c^2 \lambda_{max}}{2} -c\Big)\|\tilde{\mu}\|^2,
\end{eqnarray*}
where we have set $\alpha_0=\alpha_n=0$ and have written $\lambda_{max}$ for the maximal eigenvalue of the matrix $(a_{ij})_{1\leq i,j\leq n}$. It is not hard to see that the eigenvalues of the latter are given by $1,\ldots,1,0$, so that $\lambda_{max}=1$. Thus, for any $c\in(0,1)$ the Poincar\'e inequality \eqref{cent_poin} with the constant $C_P=-\frac{1}{(c^2-2c)\|\tilde{\mu}\|^2}$ holds true. Taking the limit $c\uparrow1$, we conclude that the Poincar\'e inequality \eqref{cent_poin} is satisfied with  
\eq\label{cent_poin_const}
C_P=\frac{1}{\|\tilde{\mu}\|^2}=\frac{1}{\sum_{i\in I} (\delta_i-n^{-1}\sum_{j\in I}\delta_j)^2}.
\en
This finishes the proof. \ep

\section*{Acknowledgement}

The authors are grateful to Zhen-Qing Chen and Ruth Williams for pointing out several references. They also thank Ioannis Karatzas for multiple useful discussions.

\bibliographystyle{alpha}

\end{document}